\newtheorem{theorem}{Theorem}[section]
\newtheorem{lemma}[theorem]{Lemma}
\newtheorem{problem}[theorem]{Problem}
\newtheorem{definition}[theorem]{Definition}
\newcommand{\F}{{\mathcal F}}
\newcommand{\T}{{\mathcal T}}
\begin{document}
	
\title{
		Perfect and nearly perfect separation dimension of complete and random graphs
		\thanks{This research was supported by the Israel Science Foundation (grant No. 1082/16).}
	}
	
\author{
		Raphael Yuster
		\thanks{Department of Mathematics, University of Haifa, Haifa 3498838, Israel. Email: raphael.yuster@gmail.com}}
	
\date{}
	
\maketitle
	
\setcounter{page}{1}
	
\begin{abstract}
	
The separation dimension of a hypergraph $G$ is the smallest natural number $d$ for which there is an embedding of $G$ into $\mathbb{R}^d$, such that any pair of disjoint 
edges is separated by some hyperplane normal to one of the axes. The perfect separation dimension further requires that any pair of disjoint edges is separated by the same amount of such (pairwise nonparallel) hyperplanes.
While it is known that for any fixed $r \ge 2$, the separation dimension of any $n$-vertex $r$-graph
is $O(\log n)$, the perfect separation dimension is much larger.
In fact, no polynomial upper-bound for the perfect separation dimension of $r$-uniform hypergraphs is known.

In our first result we essentially resolve the case $r=2$, i.e. graphs.
We prove that the perfect separation dimension of $K_n$ is linear in $n$,
up to a small polylogarithmic factor. In fact, we prove it is at least $n/2-1$ and at most $n(\log n)^{1+o(1)}$.

Our second result proves that the perfect separation dimension of almost all graphs is also linear in $n$,
up to a logarithmic factor. This follows as a special case of a more general result showing that
the perfect separation dimension of the random graph $G(n,p)$ is w.h.p. $\Omega(n p /\log n)$ for a wide range of values of $p$, including all constant $p$.

Finally, we prove that significantly relaxing perfection to just requiring that any pair of disjoint edges
of $K_n$ is separated the same number of times up to a difference of  $c \log n$ for some absolute constant $c$, still requires the dimension to be $\Omega(n)$. This is perhaps surprising as it is known that if we allow a difference of $7\log_2 n$, then the dimension reduces to $O(\log n)$.

\vspace*{3mm}
\noindent
{\bf AMS subject classifications:} 05B30, 05A05, 05C50, 05C62, 05C80\\
{\bf Keywords:} separation dimension; complete graph; random graph; perfect separation

\end{abstract}

\section{Introduction}

The separation dimension of a hypergraph $G$ is the smallest natural number $d$ for which there is an embedding of $G$ into $\mathbb{R}^d$, such that any pair of disjoint edges is separated by some hyperplane normal to one of the axes. Equivalently, it is the smallest possible size of a {\em separating family} (also called {\em pairwise-suitable family} in \cite{BCG+-2014}) of total orders of $V(G)$, where a family $\F$ of total orders of $V(G)$ is separating if
for any two disjoint edges of $G$, there exists at least one element of $\F$ in which all the vertices
of one edge precede those of the other. Let $\pi(G)$ denote the separation dimension of $G$.

Separation dimension of graphs and hypergraphs has been studied by several researchers. Basavaraju  et al. \cite{BCG+-2014} were the first to define and study it,
motivated by its interesting connection with {\em boxicity}, a well-studied geometric representation of graphs.
In particular, they have shown that for every fixed $r \ge 2$ it holds that $\pi(K_n^r)=\Theta(\log n)$\footnote{Hereafter, when necessary, assume that $\log n=\log_2 n$.} where $K_n^r$ is the complete $r$-graph with $n$ vertices.
Alon et al. \cite{ABC+-2015} proved that if $G$ is a graph with maximum degree at most $d$, then $\pi(G)$ is upper bounded by
an almost linear function of $d$ and provided constructions where $\pi(G) \ge d/2$.
Scott and Wood \cite{SW-2018} improved the aforementioned result on bounded degree graphs showing that
$\pi(G) \le 20 d$.
Recently, Alon et al. \cite{ABC+-2018} proved that the separation dimension of a $d$-degenerate graph on $n$ vertices is $O(d \log \log n)$ and that the $\log \log n$ term is required already for $2$-degenerate graphs.
They have also shown that graphs with bounded separation dimension cannot be very dense.
Additional papers on separation dimension and some of its variants can be found in \cite{BDL-2017,CMS-2011,LW-2018,ZRM+-2018}.

Separation dimension is also closely related to the well-studied problem of minimum 
$t$-sequence covering arrays (a.k.a. totally scrambling sets of permutations), where a $t$-sequence covering array is a set of total orders of $[n]$ such that every $t$-sequence of distinct elements of $[n]$ is a subsequence of at least one of the orders. The study of $t$-sequence covering arrays was initiated by Spencer \cite{spencer-1972} (generalizing a problem of Dushnik \cite{dushnik-1950}) and was subsequently studied in many papers (see e.g. \cite{CCHZ-2013,furedi-1996,radhakrishnan-2003,tarui-2008} and their references).
It is immediate to see that any upper bound for the size of a $2r$-sequence covering array implies the same upper bound for $\pi(K_n^r)$. The design-theoretic counterpart of $t$-sequence covering arrays are also well studied;
these are $t-(n,n,\lambda)$ {\em directed designs} (see \cite{CD-2006} Chapter 20 as well as \cite{CCHZ-2013}
for known results on the existence and non-existence of $t-(n,n,\lambda)$ and, more generally, $t-(n,k,\lambda)$ directed designs). Recalling standard design-theoretic notation, a $t-(n,n,\lambda)$ directed design requires
that every $t$-sequence of distinct elements of $[n]$ is a subsequence of precisely $\lambda$ of the total orders. When considering $t-(n,n,\lambda)$ directed designs ($t$ fixed), the natural goal is to
determine the minimum $\lambda$ for their existence. Unfortunately, we do not yet know much about that minimum $\lambda$. In fact, for all $t \ge 4$, no polynomial lower bound for $\lambda$ is known \cite{yuster-2020}.
As we shall see below, this particular difficulty (already at $t=4$) is one of the most intriguing reasons to
look at the design-theoretic counterpart of separation dimension (which we call perfect separation dimension below) in addition to it being a very natural problem.

For a separating family $\F$ (hereafter we allow $\F$ to be a multiset) and for a pair $\{e,f\}$ of disjoint edges of $G$, let $c_{\F}(\{e,f\})$ be the number of total orders in $\F$ that separate $e$ and $f$.
\begin{definition}[perfect separation dimension]
	A separating family $\F$ for a hypergraph $G$ is {\em perfect} if there exists a positive integer $\lambda$ such that
	for any pair $\{e,f\}$ of disjoint edges of $G$ it holds that $c_{\F}(\{e,f\})=\lambda$.
	We call $\lambda$ the {\em multiplicity} of $\F$.
	Let ${\rm PSD}(G)$ be the {\em perfect separation dimension of $G$}, namely the smallest possible size of a
	perfect separating family.
\end{definition}

Notice that for any uniform hypergraph $G$, the parameter ${\rm PSD}(G)$ is well-defined, as the family of all total orders of $V(G)$ is a perfect separating family. Hence, ${\rm PSD}(G) \le n!$ is a trivial upper bound for any $r$-graph with $n$ vertices. Another easily observed fact is that if $G'$ is a sub(hyper)graph of $G$, then ${\rm PSD}(G') \le {\rm PSD}(G)$. In particular, if $G$ is an $n$-vertex $r$-graph, then ${\rm PSD}(G) \le {\rm PSD}(K_n^r)$.
As mentioned earlier, $\pi(K_n^r) = \Theta(\log n)$ \cite{BCG+-2014}.
On the other hand, determining the order of magnitude of ${\rm PSD}(K_n^r)$ seems more involved.
In fact, the following very crude open problem emerges:
\begin{problem}\label{prob:1}
	Let $r \ge 2$ be fixed. Is ${\rm PSD}(K_n^r)$ upper-bounded by a polynomial function of $n$ ?
\end{problem}
Our first result resolves, in a strong sense, the first interesting case of Problem \ref{prob:1},
namely the case $r=2$ of graphs. It determines ${\rm PSD}(K_n)$ up to a small polylogarithmic factor.

\begin{theorem}\label{t:main}
	For all $n \ge 4$ it holds that
	$$
	n/2-1 \le {\rm PSD}(K_n) \le n(\log n)^{1+o(1)}\,.
	$$
\end{theorem}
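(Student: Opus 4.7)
The plan is to prove the two inequalities separately. For the lower bound, I start from the basic identity $|\F| = 3\lambda$, which is immediate because for every 4-subset of vertices $\{a,b,c,d\}$, each total order separates exactly one of the three disjoint pairings $\{ab,cd\}$, $\{ac,bd\}$, $\{ad,bc\}$; summing these three multiplicities across $\F$ gives $|\F|$. This alone only yields $|\F|\ge 3$. Standard first-moment calculations, such as fixing a vertex $v$ and double-counting separations of the $(n-1)\binom{n-2}{2}$ pairs of type $(\{v,u\},\{b,c\})$, all collapse back to the same identity, because the per-ordering contribution turns out to be $\binom{n-1}{3}$ irrespective of where $v$ sits in the order.

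To obtain a genuinely $n$-dependent bound, I would switch to a linear-algebraic/Fisher-type argument. The idea is to form the $|\F|\times\binom{n}{2}$ ``inversion matrix'' $M$ with $M_{\sigma,\{u,v\}}=1$ if $u$ precedes $v$ in $\sigma$, restrict to a carefully chosen $\Theta(n)$-column submatrix (for instance the columns coming from a near-perfect matching of $K_n$), and show that perfect separation of the resulting disjoint matching-pairs forces the rows of this submatrix to span a space of dimension at least $n/2-1$. The main obstacle here is to identify the precise substructure whose rank captures linear growth: since every first-moment identity collapses to $|\F|=3\lambda$, one must exploit something strictly finer than counting separations, most likely quadratic-in-$\F$ information such as pair-correlation of separations along the matching.

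For the upper bound $n(\log n)^{1+o(1)}$, I plan a two-layer construction. Layer one is a $4$-sequence covering array of $[n]$ of size $O(\log n)$, which exists by \cite{BCG+-2014}; this guarantees that every disjoint pair is separated at least once, but not equally often. Layer two balances the multiplicity: take $\Theta(n)$ independent random relabelings of layer one so that, by symmetrization and Chernoff, every pair's separation count concentrates within a polylogarithmic window of the common expectation, and then patch the residual deficits by $o(n\log n)$ further orderings chosen via a greedy or LP-rounding procedure on the deficit vector. A natural alternative route is a recursive ``doubling'' inequality of the form ${\rm PSD}(K_{2n})\le 2\,{\rm PSD}(K_n)+\widetilde{O}(n)$, whose iteration directly gives the claimed bound.

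In both routes the delicate point is the same: a single added ordering perturbs the separation counts of $\Theta(n^4)$ disjoint pairs simultaneously, so I must argue that a relatively small number of well-chosen orderings can \emph{exactly} equalize all $\Theta(n^4)$ multiplicities. In the recursive route, the corresponding obstacle is the handling of the ``cross'' disjoint pairs (one endpoint in each half): no ordering of type ``all of $A$ before all of $B$'' separates any of them, so the recursion must use carefully interleaved orderings and one has to verify that the interleaving does not destroy the perfectness already obtained within each half. Closing this gap — converting approximate balance into exact balance at cost only polylogarithmic in $n$ — is where I expect the main work to lie.
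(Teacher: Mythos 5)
Your write-up is a plan rather than a proof, and both halves contain genuine gaps. On the lower bound, your instinct that one must use ``quadratic-in-$\F$'' information via a rank argument is exactly right in spirit, but you stop precisely at the point where the work happens: you do not exhibit the substructure whose rank is forced to be $\Omega(n)$. The paper's argument does not use the $|\F|\times\binom{n}{2}$ inversion matrix restricted to a matching; instead it fixes a single pair $\{u,v\}$ that is \emph{not extremal} in $\F$ (such a pair exists by a short counting argument, since at most $6$ pairs can be extremal) and forms two $(n-2)\times|\F|$ matrices $A,B$ recording, for each vertex $w$ and each $\sigma$, whether $w$ precedes both of $u,v$ or follows both. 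Perfection applied to the pairs $\{wx,uv\}$ forces every off-diagonal entry of $C=AA^T+BB^T$ to equal $\lambda$, and non-extremality of $\{u,v\}$ forces every diagonal entry to exceed $\lambda$, whence $C$ is nonsingular and $n-2\le \mathrm{rank}(C)\le 2|\F|$. Your proposal names neither the matrix, nor the role of the distinguished non-extremal pair, nor why the resulting Gram matrix is nonsingular; without these the bound $|\F|\ge n/2-1$ does not follow.

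The upper-bound plan has a quantitative error and a structural one. Quantitatively, if layer one has size $m=O(\log n)$ and you take $N=\Theta(n)$ independent random relabelings, then each pair's count is a sum of $N$ i.i.d.\ variables bounded by $m$ with mean $m/3$; after a union bound over $\Theta(n^4)$ pairs the deviation is of order $m\sqrt{N\log n}=O(\sqrt{n}\,\log^{3/2} n)$, not polylogarithmic, so the ``patching'' phase must correct deficits of order $\sqrt{n}$ on $\Theta(n^4)$ pairs. Structurally, the patching step is the entire theorem: each added ordering increments, for every $4$-set, exactly one of its three pairings, so exact equalization is a design-existence problem that a generic greedy or LP-rounding scheme does not solve (indeed the paper notes that the natural exact object, a perfect $4$-sequence covering array, provably requires size $\Omega(n^2)$, so no route through approximate-then-exact balancing of a $4$-sequence covering array can stay quasi-linear without new structure). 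The paper instead gives an explicit recursion ${\rm PSD}(K_{r^2})\le 2(r+1)\,{\rm PSD}(K_r)$-type via ordered affine planes of order $r$, crucially carrying a \emph{stronger} inductive invariant (the family is simultaneously a perfect $3$-sequence covering array), and verifying perfection by a case analysis of how the four vertices of two disjoint edges distribute among the parallel classes; your doubling recursion flags the cross-pair obstruction correctly but offers no construction resolving it. In short, neither direction is established by the proposal as written.
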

\noindent We remain with the following more delicate open problem for graphs.
\begin{problem}
	Is ${\rm PSD}(K_n)$ linear in $n$?
\end{problem}
Let us briefly explain the major difficulty in proving the upper bound in Theorem \ref{t:main}.
Recall the immediate observation that any upper bound for the minimum $\lambda$ for which a $2r-(n,n,\lambda)$ directed design exists, serves as an upper bound for $PSD(K_n^r)$.
But, as mentioned earlier, even for the smallest case of $r=2$, namely graphs, we do not know whether there exists a $4-(n,n,\lambda)$ directed design in which $\lambda$ is polynomial in $n$. So, we cannot use
$4-(n,n,\lambda)$ directed designs to solve Problem \ref{prob:1} for graphs. The situation is even worse since it
{\em is known} that in any $4-(n,n,\lambda)$ directed design, $\lambda$ is {\em at least} quadratic in $n$
\cite{yuster-2020}, so we will {\em never} be able to use $4-(n,n,\lambda)$ directed designs to obtain Theorem \ref{t:main}. On the other hand, $4-(n,n,\lambda)$ directed designs and $PSD(K_n)$ seem ``so close''.
Indeed, in the former, we need to satisfy $n(n-1)(n-2)(n-3)=\Theta(n^4)$ requirements as these are the number of sequences of four distinct elements of $n$ while in the latter, we need to satisfy $n(n-1)(n-2)(n-3)/8=\Theta(n^4)$ requirements as these are the number of disjoint pairs of edges of $K_n$.
So, one may stipulate that the two parameters are close, while apparently, by Theorem \ref{t:main}, they are so different. The upper bound proof of Theorem \ref{t:main} relies, in fact, on modifying in a very careful way
recursive constructions of $3-(n,n,\lambda)$ directed designs, a seemingly much weaker requirement involving only $\Theta(n^3)$ demands. The recursion works by applying several operations (e.g. reversals, concatenation, composition and various label renaming) on the lines of an ordered finite affine plane, to obtain a larger
ordered affine plane. While these operations can easily keep the number of occurrences of each $3$-sequence intact, this is {\em not so} for edge pairs as these involve four vertices.
We have to apply each of our operations in the correct order
and the correct amount, as the four vertices of two disjoint edges have {\em many non-isomorphic ways} to distribute themselves among the lines of the affine plane (see Tables \ref{table:types} and \ref{table:cases}). Balancing all
distribution types in a systematic way is a delicate process of choosing the sequence and amount of operations which, fortunately, is possible (but certainly not a-priori obvious that it will work out; e.g. by the above, it is {\em impossible} to make all four sequences appear the same amount of times as subsequences).

We now turn to our second main result. One may wonder whether the ``completeness'' of $K_n$ is the reason that
its perfect separation dimension is essentially linear (and not significantly smaller). We show that this is not so. In fact, it will follow as a special case from the next theorem that almost all $n$-vertex graphs have perfect separation dimension almost linear in $n$.
Recall the Erd\H{o}s-R\'enyi random graph $G(n,p)$, the probability space of all labeled $n$-vertex graphs with
edge probability $p=p(n)$.
In what follows ``with high probability'' (w.h.p.) means with probability approaching $1$ as $n$ goes to infinity.
The next theorem shows that for a wide range of $p=p(n)$, a sampled element of $G(n,p)$ has 
perfect separation dimension close to its average degree up to a logarithmic factor. In particular, the statement
above about almost all $n$-vertex graphs follows from the case $p=\frac{1}{2}$.
\begin{theorem}\label{t:rand-lower}
	Let $p=p(n)$ satisfy $n^{-0.4}  \le p < 1$. Then w.h.p. 
	$$
	{\rm PSD}[G(n,p)] \ge \frac{np}{200 \log n}\;.
	$$
\end{theorem}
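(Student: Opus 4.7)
The plan is a union-bound argument. Set $s := \lceil np/(200\log n)\rceil$ and aim to show $\Pr[{\rm PSD}(G(n,p))\le s] = o(1)$. Fix any multiset $\F$ of $s$ total orders of $V(G)$ and any candidate multiplicity $\lambda \in \{1,\ldots,s\}$. For each disjoint pair $\{e,f\}$ of potential edges of $K_n$, the count $c_\F(\{e,f\})$ is a deterministic integer, so one can define $A_\lambda := \{\,\{e,f\} : c_\F(\{e,f\})\ne\lambda\,\}$. Then $\F$ perfectly separates $G(n,p)$ with multiplicity $\lambda$ if and only if for every $\{e,f\}\in A_\lambda$ at least one of $e,f$ is a \emph{non}-edge of $G$. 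It therefore suffices to bound, over all $(\F,\lambda)$, the probability of this avoidance event.

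To bound the avoidance probability I would apply Janson's inequality to the increasing events $B_{e,f} := \{e \in E(G)\wedge f\in E(G)\}$ indexed by $\{e,f\}\in A_\lambda$, each of probability $p^2$; two such events sharing a common vertex-pair have joint probability $p^3$, and otherwise they are independent. With $\mu := p^2|A_\lambda|$ and $\Delta := p^3\sum_g\binom{d_{A_\lambda}(g)}{2}$ (here $g$ ranges over vertex-pairs of $K_n$ and $d_{A_\lambda}(g)$ counts the $A_\lambda$-pairs containing $g$), Janson yields $\Pr[\text{avoidance}]\le\exp(-\mu^2/(2(\mu+\Delta)))$. A direct estimate shows that this is at most $\exp(-cpn^2)$ for an absolute constant $c>1/200$, provided $|A_\lambda|\ge n^2/p$. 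Union-bounding over the at most $(n!)^s\cdot s \le \exp(sn\log n + O(\log n))$ choices of $(\F,\lambda)$, and using $sn\log n = n^2 p/200$, gives
\[
\Pr[{\rm PSD}(G(n,p))\le s] \le \exp\!\bigl(\tfrac{n^2 p}{200} - cpn^2 + O(\log n)\bigr) = o(1),
\]
which proves the theorem.

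The main technical obstacle is the uniform lower bound $|A_\lambda|\ge n^2/p$, required for \emph{every} multiset $\F$ of size $s$ and every $\lambda\in\{1,\dots,s\}$. Equivalently, the good set $D_\lambda := A_\lambda^c$ cannot cover more than $3\binom{n}{4} - n^2/p$ disjoint pairs. Since $s \ll n/2-1 \le {\rm PSD}(K_n)$ by Theorem~\ref{t:main}, no $\F$ of size $s$ can equalize $c_\F$ on all disjoint pairs; what is needed is a quantitative \emph{stability} strengthening showing that even \emph{almost}-equalizing $c_\F$ forces $|\F|$ to be large. I would try to extract this by inspecting the double-counting behind the $n/2-1$ bound of Theorem~\ref{t:main} and tracking how its slack degrades as $|A_\lambda|$ shrinks. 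Ruling out pathological, highly-structured small families $\F$ which concentrate $c_\F$ on a single value across all but a small set of disjoint pairs is where the real work lies; the specific constant $200$ in the statement is the numerical trade-off between the combinatorial complexity $(n!)^s$ in the union bound and the Janson exponent $cpn^2$.
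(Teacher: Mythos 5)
Your approach---a first-moment union bound over all candidate families $\F$ of size $s$, with Janson's inequality controlling the probability that a fixed $\F$ happens to be a perfect separating family of $G(n,p)$---is genuinely different from the paper's proof. The paper argues linear-algebraically: it picks a non-extremal edge $uv$, forms the matrix $C=AA^T+BB^T$ from the deterministic lower bound of Lemma \ref{l:lower}, observes that $C-\lambda J$ vanishes on every edge of $G$ and (after a connectivity argument) has all but $O(\log n/p)$ diagonal entries nonzero, and then invokes the minrank lower bound for $\overline{G}\sim G(n,q)$ of Alon et al.\ to force $rank(C)=\Omega(np/\log n)$, whence $|\F|\ge rank(C)/2$. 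Your reduction of the event ``$\F$ perfectly separates $G$ with multiplicity $\lambda$'' to the avoidance event over $A_\lambda$, and your entropy count $(n!)^s\le\exp(n^2p/200)$, are both correct; but the argument has two genuine gaps.

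First, the Janson step fails as stated: the avoidance probability is not controlled by $|A_\lambda|$ alone. If, for instance, every bad pair in $A_\lambda$ contained one fixed vertex-pair $g$ (so that $|A_\lambda|$ could still be $\Theta(n^2)\ge n^2/p$ for constant $p$), then avoidance occurs whenever $g$ is a non-edge of $G$, so its probability is at least $1-p=\Omega(1)$ rather than $\exp(-cpn^2)$; correspondingly your $\Delta$ term becomes $\Theta(p^3|A_\lambda|^2)$ and the exponent $\mu^2/(2(\mu+\Delta))$ collapses to $\Theta(p)$. What you actually need is that $A_\lambda$ is \emph{spread out}---say, that the conflict graph on vertex-pairs has no small vertex cover and bounded degrees---which is a structural statement about near-perfect families, not a counting one. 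Second, even the counting statement you defer (``$|A_\lambda|\ge n^2/p$ for every $\F$ of size $s$ and every $\lambda$'') is a quantitative stability strengthening of ${\rm PSD}(K_n)\ge n/2-1$, uniform over all small families; it amounts to lower-bounding ${\rm PSD}(G)$ for every graph obtained from $K_n$ by deleting relatively few edges, which is essentially the content of the theorem for $p$ bounded below, and no proof is offered. The matrix $C$ of Lemma \ref{l:lower} does not obviously degrade gracefully when a small set of off-diagonal constraints is dropped, and the paper's detour through minrank is precisely the device that supplies this robustness: it converts ``the off-diagonal constraints hold on the (random) edge set of $G$'' directly into a rank bound. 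As it stands, the heart of your proof is missing, and the one quantitative claim you do make (the Janson estimate from the size hypothesis alone) is false.
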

In addition to the fact that Theorem \ref{t:rand-lower} shows that perfect separation dimension of almost all graphs is essentially linear, the {\em proof} of Theorem \ref{t:rand-lower} is also of some additional value:
One of our main ingredients in the proof is the recent breakthrough of Alon et al. \cite{ABG+-2020} on the {\em minrank} of random graphs over ${\mathbb R}$ (in fact, over any field), extending an earlier result
of Golovnev, Regev, and Weinstein \cite{GRW-2018} on the minrank of random graphs over finite fields.
When combined with combinatorial, probabilistic, and linear algebraic arguments as is done in the proof of Theorem \ref{t:rand-lower}, we believe that the minrank approach will be of use for proving lower bounds of design-theoretic parameters of random graphs in other problems.

Returning to $K_n$, our final result addresses the question whether the perfection requirement is the essential cause for ${\rm PSD}(K_n)$ to be $\Omega(n)$. Perhaps if we relax the condition and just ask for a
less than perfect separating family, we can reduce the dimension considerably?
The answer too this problem turns out to be intriguingly interesting.
To formalize this question we need the following definition:
\begin{definition}[$\Delta$-balanced separation dimension]
	For a non-negative integer $\Delta$, a separating family $\F$ for a hypergraph $G$ is {\em $\Delta$-balanced} if for any two pairs of edges $\{e_1,f_1\}$, $\{e_2,f_2\}$
	where $e_1 \cap f_1 = \emptyset$ and $e_2 \cap f_2 = \emptyset$ it holds that
	$|c_{\F}(\{e_1,f_1\})-c_{\F}(\{e_2,f_2\})| \le \Delta$.
	Let $\pi_{\Delta}(G)$ be the {\em $\Delta$-balanced separation dimension}, namely the smallest possible size of a $\Delta$-balanced separating family.
\end{definition}
Observe that $\pi(G)=\pi_{\pi(G)}(G)$ and on the other extreme, $PSD(G)=\pi_{0}(G)$.
Recall again that $\pi(K_n)=\Theta(\log n)$ and in fact it is proved in \cite{BCG+-2014} that
$\pi(K_n) \le 7 \log n$.
In particular, one trivially gets that $\pi_{\Delta}(K_n) \le  7 \log n$ for $\Delta=7\log n$.
Now, suppose we decrease $\Delta$ by just a constant factor to $c \log n$ for some small absolute constant $c$.
It is reasonable to suspect that  $\pi_{\Delta}(K_n)$ should not increase by much.
Perhaps surprisingly, however, it does. Moreover, it jumps to $\Omega(n)$,
just as for $\Delta=0$ (namely, ${\rm PSD}(K_n)$).
\begin{theorem}\label{t:fair-lower}
	There exist absolute positive constants $c$ and $K$ such that for all $0 \le \Delta \le c \log n$
	$$
	\pi_{\Delta}(K_n) \ge \frac{n}{K}\;.
	$$
\end{theorem}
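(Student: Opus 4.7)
I would adapt the strategy behind the $n/2-1$ lower bound for ${\rm PSD}(K_n)$ in Theorem \ref{t:main} to the $\Delta$-balanced setting, carefully tracking how the additive slack propagates. Let $\F$ be a $\Delta$-balanced family of $K_n$ of size $t$, and suppose toward contradiction $t<n/K$ for some $K$ to be chosen. The starting observation is that for every $4$-subset $S\subseteq V$ the three disjoint pairings have counts summing to $t$, so $\Delta$-balance forces each to lie in $[t/3-\Delta,\,t/3+\Delta]$. The core combinatorial identity I would use comes from fixing distinct $v,v'\in V$ and considering the $(n-2)(n-3)$ disjoint edge-pairs of the form $\{\{v,a\},\{v',b\}\}$ with $a,b\in V\setminus\{v,v'\}$ and $a\ne b$. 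A direct case analysis shows that the number of such pairs separated by an ordering $\sigma$ equals $\binom{n-2}{2}-\binom{L_\sigma(v,v')}{2}-\binom{R_\sigma(v,v')}{2}$, where $L_\sigma(v,v')$ (resp.\ $R_\sigma(v,v')$) is the number of vertices in $V\setminus\{v,v'\}$ that precede (resp.\ succeed) both $v$ and $v'$ in $\sigma$. Summing over $\F$ and applying $\Delta$-balance then yields, for every ordered pair $(v,v')$,
\begin{equation*}
\sum_{\sigma\in\F}\left[\binom{L_\sigma(v,v')}{2}+\binom{R_\sigma(v,v')}{2}\right] \;=\; \tfrac{t}{3}\binom{n-2}{2}\;\pm\;O\!\bigl(\tbinom{n-2}{2}\Delta\bigr).
\end{equation*}

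\textbf{Aggregation.} The next step is to aggregate these $\binom{n}{2}$ identities. Individually each is consistent with any $t\ge1$, but they are all driven by only $t$ positional vectors (one per $\sigma\in\F$), which gives $O(tn)$ degrees of freedom to satisfy $\Theta(n^2)$ tight second-moment constraints. Exploiting the convexity of $x\mapsto\binom{x}{2}$, the left-hand side above is essentially a weighted second moment of the joint positional distribution of $(v,v')$ across $\F$. The goal is to show that simultaneously balancing these $\binom{n}{2}$ second moments to within additive $O(n^2\Delta)=O(n^2\log n)$ forces the positional distribution of $\F$ to be rich, so that $|\F|\ge n/K$. I would formalise this by expressing the joint constraint as a rank or discrepancy inequality on the pattern matrix whose rows are the position vectors $\{\sigma\}_{\sigma\in\F}$ and whose columns are indexed by the pairs $(v,v')$; this is in the spirit of the minrank approach of Alon et al.\ used in the proof of Theorem \ref{t:rand-lower}, though the matrix structure here is different.

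\textbf{Main obstacle.} The crux of the proof is that the individual identity above is globally tight: both the main term $\tfrac{t}{3}\binom{n-2}{2}$ and the error $O(n^2\Delta)$ are $\Theta(n^2)$, so no single identity alone can yield an $\Omega(n)$ bound. The bound must emerge only from processing many identities jointly, and the technical difficulty is to identify the right linear-algebraic invariant (rank, discrepancy, or a suitable entropy quantity) that breaks down precisely when $t<n/K$ and $\Delta\le c\log n$ for small absolute $c$. A more elementary alternative that I would try in parallel is to show that any $\Delta$-balanced family of size $t$ can be \emph{completed} to a perfect separating family by appending $O(\Delta)$ auxiliary orderings---using the fact that the "deficit" of each $4$-set is at most $\Delta$---and then invoke Theorem \ref{t:main} to deduce $t+O(\Delta)\ge n/2-1$, giving $t\ge n/K$ once $\Delta\le c\log n$. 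However, verifying that such a completion exists with only $O(\Delta)$ additional orderings is itself a nontrivial design-theoretic question, and I expect the direct aggregation route to be cleaner.
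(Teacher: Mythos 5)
Your opening observation is correct and matches the paper's starting point: since every $\sigma$ separates exactly one of the three pairings of any $4$-set, $\Delta$-balance forces every count $c_{\F}(\{e,f\})$ into $[|\F|/3-\Delta,\,|\F|/3+\Delta]$, and your identity $\binom{n-2}{2}-\binom{L_\sigma}{2}-\binom{R_\sigma}{2}$ for the pairs $\{\{v,a\},\{v',b\}\}$ does check out. But the proposal stops exactly where the proof has to begin: you write that the crux is ``to identify the right linear-algebraic invariant,'' and you never identify it, so there is a genuine gap. The paper's invariant is concrete. Fix a \emph{single} pair $\{u,v\}$, chosen by an averaging argument so that $\sum_{w} s_{\F}(u,v,w)\ge \frac{2}{3}|\F|(n-2)$; this guarantees that at least $(n-2)/3$ of the diagonal entries of the matrix below are at least $|\F|/2$. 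Form the $0/1$ matrices $A,B$ recording, for each vertex $w$ and each $\sigma\in\F$, whether $w$ precedes (respectively succeeds) both $u$ and $v$ in $\sigma$, and set $C=AA^T+BB^T$, so that $rank(C)\le 2|\F|$. The off-diagonal entry $C[x,y]$ equals $c_{\F}(\{xy,uv\})$ and hence lies within $\Delta$ of $|\F|/3$, while a constant fraction of the diagonal entries are at least $|\F|/2$. Subtracting $\frac{|\F|}{3}J$ and rescaling by $6/|\F|$ yields a matrix with diagonal entries at least $1$ and off-diagonal entries of magnitude at most $6c/\alpha$, where $|\F|=\alpha\log n$. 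The decisive tool your plan lacks is Alon's lower bound on the rank of such perturbed diagonal matrices, $rank(D)\ge \mu\log N/(\epsilon^2\log(1/\epsilon))$; with $\epsilon=6c/\alpha$ this gives $rank(C)\ge 3|\F|$ once $c$ is a sufficiently small absolute constant, contradicting $rank(C)\le 2|\F|$. This is precisely where the threshold $\Delta\le c\log n$ enters quantitatively, and it explains why the bound degenerates at $\Delta=7\log n$. Aggregating your second-moment identities over all pairs $(v,v')$ works against you here: it discards the entrywise control on $C$ that the rank bound consumes, and as you yourself note, the aggregated scalar constraints are individually vacuous.

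Your fallback idea, completing a $\Delta$-balanced family to a perfect one by appending $O(\Delta)$ orderings and then invoking Theorem \ref{t:main}, should be abandoned rather than merely flagged as nontrivial. A perfect family must make all $3\binom{n}{4}$ counts exactly equal, whereas the deficiency pattern of a $\Delta$-balanced family is an essentially arbitrary assignment of integers of size up to $2\Delta$ to these $\Theta(n^4)$ slots; each appended permutation increments exactly one slot per $4$-set in a rigidly correlated fashion, so $O(\log n)$ of them cannot realize a prescribed pattern in general, and nothing in the paper (or elsewhere) supplies such a completion.
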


The rest of this paper proceeds as follows. In Section 2 we construct the $n(\log n)^{1+o(1)}$ upper bound for ${\rm PSD}(K_n)$. In Section 3 we prove the $n/2-1$ lower bound for ${\rm PSD}(K_n)$, thus completing the proof of Theorem \ref{t:main}. The perfect separation dimension of random graphs is considered in Section 4 where we prove Theorem \ref{t:rand-lower}. Finally, in Section 5 we consider $\Delta$-balanced separation dimension and prove Theorem \ref{t:fair-lower}.

\section{Constructing a perfect separating family}

In this section we prove the upper bound part of Theorem \ref{t:main}. We first set some terminology that we use throughout the rest of this section (in fact, throughout the rest of the paper). Let $S_n$ denote the symmetric group of order $n$. Hence, a separating family $\F$ of a graph with vertex set $[n]=\{1,\ldots,n\}$ is a (multi)subset of $S_n$. When explicitly specifying an element $\sigma \in S_n$ we use the notation $\sigma=(a_1,\ldots,a_n)$ to mean that
$\sigma(i)=a_i$ and $\sigma^{-1}(a_i)=i$ (this notation should not be confused with the cycle decomposition notation of a permutation).
We use $\sigma\circ\pi$ to denote the usual group element product of two elements $\sigma,\pi \in S_n$.
We also view $\sigma=(a_1,\ldots,a_n)$ as a sequence whose $i$'th element is $a_i$, hence
the notion of a subsequence of $\sigma$ is well-defined, as well as the notion of {\em reverse} where
$rev(\sigma)=(a_n,\ldots,a_1)$.

Our construction of a perfect separating family of $K_n$ requires several ingredients which we next specify. Our first ingredient is the case $k=3$ of the following definition.
\begin{definition}\label{def:sca}[perfect $k$-sequence covering array]
	A multiset $\T \subseteq S_n$ is a {\em perfect $k$-sequence covering array} if there exists a positive integer $t$ such that every sequence of $k$ distinct elements of $[n]$ is a subsequence of precisely
	$t$ elements of $\T$. We call $t$ the {\em multiplicity} of $\T$.
	The equivalent design-theoretic notion is a $k-(n,n,t)$ directed design \cite{CD-2006}.
\end{definition}
Notice that if $\T$ is a perfect $k$-sequence covering array of multiplicity $t$, then $|\T|=k!t$.
As mentioned in the introduction, a (not necessarily perfect) {\em $k$-sequence covering array} just asks for a subset of $S_n$ that has the property that each $k$-sequence of distinct elements of $[n]$ is a subsequence of {\em some} element of that subset. Fairly close lower and upper bounds are known for the minimum size of a $k$-sequence covering array of $[n]$, the state of the art for the case $k=3$ given in \cite{furedi-1996} (lower bound), \cite{tarui-2008} (upper bound)  and
for general $k$ given in \cite{radhakrishnan-2003} (lower bound), \cite{spencer-1972} (upper bound).
Fairly close upper and lower bounds are known
for the minimum size of a perfect $3$-sequence covering array of $[n]$ \cite{yuster-2020}.
Trivially, every perfect $4$-sequence covering array is
a perfect separating family. Unfortunately, as we have already mentioned in the introduction, a lower bound of $\Omega(n^2)$ is known
for the minimum size of a perfect $4$-sequence covering array \cite{yuster-2020}. Since Theorem \ref{t:main} asserts a quasi-linear upper bound for
a perfect separating family, we cannot use perfect $4$-sequence covering arrays for our construction.
Nevertheless, we will prove that a modification of the construction of perfect $3$-sequence covering arrays given in \cite{yuster-2020} (requiring a considerable modification of the proof), yields a construction of a perfect separating family.
Notice that a priori, the requirement of being a perfect separating family seems more demanding than being
a perfect $3$-sequence covering array, as the former requires satisfying $\Theta(n^4)$ demands while the latter
requires satisfying $\Theta(n^3)$ demands.

We will need the following simple lemma that merely states that a perfect separating family as well as a perfect $k$-sequence covering array is closed under relabeling.
\begin{lemma}\label{l:mult}
	Let $\pi$ be a total order of an $n$-element set $V$. Let $\F$ be a perfect separating family of $K_n$, and let $\T$ be a perfect $k$-sequence covering array.
	Then $\pi\F = \{\pi \circ \sigma\,|\, \sigma \in \F\}$ is a perfect separating family of the complete graph on vertex set $V$. Similarly,	$\pi\T = \{\pi \circ \sigma\,|\, \sigma \in \T\}$ 
	is a perfect $k$-sequence covering array of the set $V$ \footnote{Although in Definition \ref{def:sca} the ground set is $[n]$, we can equivalently consider any $n$-set $V$ as a ground set and ask for a multiset of total orders of $V$.}.
\end{lemma}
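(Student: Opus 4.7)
The plan is to observe that post-composing each $\sigma$ with $\pi$ is exactly a relabeling of the ground set: if we view $\sigma = (a_1,\ldots,a_n)$ as placing the element $a_i$ at position $i$, then $\pi\circ\sigma$ places $\pi(a_i)$ at position $i$. Equivalently, for every element $x \in [n]$, its position in $\sigma$ equals the position of $\pi(x)$ in $\pi\circ\sigma$; formally, $(\pi\circ\sigma)^{-1}(\pi(x)) = \sigma^{-1}(x)$. Thus $\pi\circ\sigma$ is obtained from $\sigma$ by renaming each label $x$ to $\pi(x)$, and both the subsequence relation and the edge-separation relation are invariant under such a renaming.

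For the sequence-covering part, I would fix a $k$-sequence $(v_{i_1},\ldots,v_{i_k})$ of distinct elements of $V$ and set $a_j = \pi^{-1}(v_{i_j})$. By the observation above, $\pi\circ\sigma$ contains $(v_{i_1},\ldots,v_{i_k})$ as a subsequence if and only if $\sigma$ contains $(a_1,\ldots,a_k)$ as a subsequence. Since $\T$ is a perfect $k$-sequence covering array with multiplicity $t$, exactly $t$ elements of $\T$ have this property, so exactly $t$ elements of $\pi\T$ contain $(v_{i_1},\ldots,v_{i_k})$ as a subsequence. Hence $\pi\T$ is itself a perfect $k$-sequence covering array of $V$ with the same multiplicity $t$.

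For the separating-family part I would argue analogously. Given two disjoint edges $\{u_1,u_2\}$, $\{w_1,w_2\}$ of the complete graph on $V$, set $a_i = \pi^{-1}(u_i)$ and $b_i = \pi^{-1}(w_i)$ so that $\{a_1,a_2\}$ and $\{b_1,b_2\}$ are disjoint edges of $K_n$. The relabeling identity $(\pi\circ\sigma)^{-1}(\pi(x))=\sigma^{-1}(x)$ shows that $\pi\circ\sigma$ separates $\{u_1,u_2\}$ from $\{w_1,w_2\}$ if and only if $\sigma$ separates $\{a_1,a_2\}$ from $\{b_1,b_2\}$. Therefore $c_{\pi\F}(\{u_1,u_2\},\{w_1,w_2\}) = c_{\F}(\{a_1,a_2\},\{b_1,b_2\}) = \lambda$, the multiplicity of $\F$, which is the same for all disjoint edge pairs. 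There is no real obstacle here; the only point worth being careful about is to keep straight that in the convention $\sigma=(a_1,\ldots,a_n)$ composition on the left by $\pi$ renames labels rather than permuting positions, which is exactly why the argument works.
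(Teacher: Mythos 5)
Your proposal is correct and follows essentially the same route as the paper: both rest on the observation that $\pi\circ\sigma$ separates (resp.\ contains as a subsequence) a configuration in $V$ exactly when $\sigma$ separates (resp.\ contains) its $\pi^{-1}$-preimage, so the counts are preserved. The only difference is cosmetic --- you spell out the identity $(\pi\circ\sigma)^{-1}(\pi(x))=\sigma^{-1}(x)$ and treat both parts explicitly, where the paper proves the separating-family part and declares the covering-array part analogous.
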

\begin{proof}
	We prove that $\pi\F$ is a perfect separating family of $K_n$ on vertex set $V$. The second part of the lemma is proved analogously.
	Let $\lambda$ be the multiplicity of $\F$. 
	Observe that $\pi \circ \sigma$ separates $(u_1,v_1)$ from $(u_2,v_2)$ if and only if
	$\sigma$ separates $(\pi^{-1}(u_1),\pi^{-1}(v_1))$ from $(\pi^{-1}(u_2),\pi^{-1}(v_2))$.
	As $\pi$ and each $\sigma \in \F$ are bijective, this implies that $\pi \F$ separates
	$(u_1,v_1)$ from $(u_2,v_2)$ precisely $\lambda$ times.
\end{proof}

A {\em finite affine plane of order $n$} is a set $\{P_1,\ldots,P_{n+1}\}$ of partitions of $[n^2]$ with the following properties: (i) Each $P_i$ contains $n$ parts of order $n$ each, denoted by
$P_{i,j}$ for $j=1,\ldots,n$. (ii) For any pair of distinct elements of $[n^2]$,
there is exactly one partition $P_i$ that contains both of them in the same part of $P_i$.
It is well-known that if $n$ is a prime power, then there is a finite affine plane of order $n$ \cite{kirkman-1850}.
In geometric terminology, the $P_{i,j}$ are called {\em lines} and the $P_i$ are called {\em parallel classes}.
In our proof we will view each $P_{i,j}$ not just as a set, but as an ordered set of $n$ elements of $[n^2]$.
In Figure \ref{figure:1} we list an affine plane of order $4$ viewed in this way. In fact, this will be the plane we use in the first step of our construction and we refer to it in various examples.
\begin{figure}[ht!]
$$
\begin{array}{c||c|c|c|c|c|}
   & P_1 & P_2 & P_3 & P_4 & P_5\\
\hline
 1 & (1,2,3,4) & (1,5,9,14) & (4,5,10,13) & (2,6,10,14) & (4,8,12,14) \\
 2 & (5,6,7,8) & (2,7,12,13) & (1,6,12,16) & (3,5,12,15) &  (1,7,10,15) \\
 3 & (9,10,11,12) & (3,8,10,16) & (2,8,9,15) & (4,7,9, 16) & (3,6,9,13) \\
 4 & (13,14,15,16) & (4,6,11,15) & (3,7, 11,14) & (1,8,11,13)& (2,5,11,16) \\
\end{array}
$$
\caption{A finite affine plane of order $4$. Row $j$ column $i$ is the line $P_{i,j}$ listed as a sequence of distinct elements.}\label{figure:1}
\end{figure}

\noindent
Our construction is established in the following lemma. However, before stating the lemma we require the definition of some integer function.
\begin{equation}
g(n) = 
\begin{cases*}
6 & if $n=3,4$ \\
2\cdot(1+r)g(r) & where $r$ is the smallest prime power satisfying $r^2 \ge n$.
\end{cases*}
\end{equation}
We note that $g(n)$ is recursively well-defined and monotone non-decreasing for every $n \ge 3$. Indeed, it is defined for $n=3,4$ and for larger $n$, we have that $r^2$ is less than $4 n$ as some integer between $n$ and $4 n - 1$ is an even power of $2$. Now, $r < 2\sqrt{n} \le n$. For example, we have $g(5)=g(9)=2(1+3)g(3)=48$,
$g(10)=g(16)=2(1+4)g(4)=60$ and $g(17)=g(25)=2(1+5)g(5)=576$. 

\begin{lemma}\label{l:main}
	For all $n \ge 3$, ${\rm PSD}(K_n) \le g(n)$. Furthermore, there is a perfect separating family of $K_n$ of size $g(n)$ that is also a perfect $3$-sequence covering array of $[n]$.
\end{lemma}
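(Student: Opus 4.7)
My plan is strong induction on $n$, driven directly by the recursive definition of $g$. Since $\mathrm{PSD}$ is monotone under subgraph inclusion and $g$ is monotone non-decreasing, it suffices to handle the base cases $n=3,4$ and, for each prime power $r\ge 3$, to show $\mathrm{PSD}(K_{r^2})\le 2(1+r)g(r)$ with the constructed family being simultaneously a perfect $3$-sequence covering array of $[r^2]$. The base cases are finite checks: for $n=3$ one takes all of $S_3$, which is vacuously a perfect separating family of $K_3$ (there are no disjoint edge pairs) and is a perfect $3$-SCA of multiplicity $1$; for $n=4$ one exhibits $6$ permutations of $[4]$ in which every $3$-sequence appears as a subsequence exactly once and each of the three disjoint edge pairs of $K_4$ is separated exactly twice.

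For the inductive step, fix a prime power $r\ge 3$ and let $\F_r$ be the family of size $g(r)$ supplied by induction. Using a finite affine plane $\{P_1,\dots,P_{r+1}\}$ of order $r$ on $[r^2]$, with each line $P_{i,j}$ viewed as an ordered $r$-tuple as in Figure \ref{figure:1}, I would build total orders of $[r^2]$ in two coupled layers. For each parallel class $P_i$ and each $\sigma\in\F_r$, form a total order by listing the $r$ lines of $P_i$ in the order $\sigma$ induces on line indices (so the $k$-th block of $r$ consecutive vertices is the line $P_{i,\sigma(k)}$) and, within each line, ordering the vertices by applying $\sigma$ (or a systematically chosen element of a relabeled copy of $\F_r$, via Lemma \ref{l:mult}) to the ambient ordered $r$-tuple of that line. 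This yields $(1+r)g(r)$ total orders; doubling by appending the reverse of each produces the required $2(1+r)g(r)$ orders.

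The perfect $3$-SCA verification is the easier half. A $3$-set $\{x,y,z\}\subseteq [r^2]$ either lies inside a single line of a unique parallel class $P_i$, in which case the count of each of its six $3$-sequences reduces to a count inside $\F_r$ restricted to that line (balanced by the $3$-SCA hypothesis on $\F_r$ and symmetrized by the reverse step), or it splits as $2{+}1$ in each of three parallel classes, in which case the contribution from each such $P_i$ reduces to how often $\F_r$ places one line index before another — again balanced by the $3$-SCA property of $\F_r$.

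The main obstacle is the perfect separating property. A disjoint edge pair $\{e,f\}$ with vertex set $\{a,b,c,d\}$ has, inside each parallel class $P_i$, an ``incidence type'' determined by the partition of $\{a,b,c,d\}$ across the lines of $P_i$; across the $r+1$ parallel classes this generates the non-isomorphic pair-types enumerated in Tables \ref{table:types} and \ref{table:cases}. The number of new total orders coming from $P_i$ that separate $e$ from $f$ depends on this type: when $a,b,c,d$ lie on four distinct lines of $P_i$ the count reduces to how often $\F_r$ separates two disjoint $2$-subsets of $[r]$, invoking the perfect separating hypothesis on $\F_r$; when fewer than four lines are involved, the count reduces to $3$-sequence (and $2$-sequence) counts of $\F_r$ applied both at the line-index level and within lines. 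The delicate part is verifying that the per-type contributions, summed over $i\in[r+1]$ and doubled by the reverse step, yield the same grand total for every type of disjoint edge pair — collinear $4$-tuples, two edges sharing a parallel class, edges split across several parallel classes, and so on. This is the ``correct sequence and amount of operations'' alluded to in the introduction; the case-by-case numerical balancing, rather than the construction itself, is where all the work lies and is exactly why, as the authors warn, success is not a-priori obvious.
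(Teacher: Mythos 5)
Your overall plan (induction on $n$ via the recursion for $g$, reduction to $n=r^2$ with $r$ a prime power, an affine plane of order $r$, blocks ordered by $\sigma$ with $\sigma$ also acting inside each block, and a doubling step) matches the paper's proof, and your base cases and your identification of the type/case analysis as the crux are correct. However, there is a concrete, fatal error in your doubling step: you propose to double the $(1+r)g(r)$ orders by ``appending the reverse of each,'' i.e.\ the full reversal $rev(q(P_i,\sigma))$. A total order and its full reversal separate exactly the same pairs of disjoint edges (if all of $e$ precedes all of $f$ in $\sigma$, then all of $f$ precedes all of $e$ in $rev(\sigma)$), so your doubled family separates every pair exactly twice as often as the un-doubled one and contributes nothing to the balancing. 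The paper's second family $Z_i$ instead reverses only the concatenation order of the $r$ blocks while keeping the internal order within each block \emph{unchanged}, and this asymmetry is precisely what makes the counts close. For example, for a parallel class of type $6$ ($\{a,c\}$ on one line, $b$ and $d$ on two others), separation requires the block of $\{a,c\}$ to lie between the blocks of $b$ and $d$ \emph{and} the internal order of $a,c$ to align with the block order of $b,d$; the block-only reversal flips the second condition while fixing the first, so exactly one of $q(P_i,\sigma)$, $r(P_i,\sigma)$ succeeds, whereas with full reversal either both or neither succeed and the resulting count is a $4$-element order statistic of $\F$ that the inductive hypothesis (perfect $3$-sequence covering array plus perfect separating family) does not control. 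The same problem already breaks your $3$-SCA verification in the $2{+}1$ split case: the count there is of the form $\#\{\sigma:\ b\prec c \text{ in its block and } a\text{'s block first}\}$ plus its full-reversal companion, which equals $2p_1$ for an uncontrolled quantity $p_1$ rather than the needed $g(r)/2$.

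Beyond this, your proposal defers the entire per-type numerical verification (Tables \ref{table:types} and \ref{table:cases}), which is where the paper's proof actually lives; even with the corrected doubling operation you would still need to carry out that bookkeeping to conclude that every one of the eight cases yields the common total $g(r^2)/3$. So the write-up is both incomplete and, as stated, based on a construction that provably cannot be perfect.
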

\begin{proof}
	We prove the lemma by induction on $n$. For the base cases $n=3,4$, we have $g(3)=g(4)=6$.
	Trivially, $S_3$ is a perfect $3$-sequence covering array of $\{1,2,3\}$ of size $6$. It is also vacuously a perfect separating family of $K_3$ as there are no two disjoint edges in $K_3$.
	For $n=4$, consider the subset of permutations of $S_4$ given by
	$\{1234,1432,4231,2413,3412,3214\}$. It consists of $6$ elements and it is easy to verify that it is a 
	perfect $3$-sequence covering array of $\{1,2,3,4\}$ (with multiplicity $1$) as well as a perfect separating 
	family of $K_4$ of multiplicity $2$. Notice that this set is not the smallest possible perfect separating family of $K_4$. Indeed, its subset $\{1234,1432,4213\}$ is a perfect separating family of $K_4$ of multiplicity $1$.
	
	For the general case, let us first observe that it suffices to prove the lemma for values of $n$ such that
	$n=r^2$ and $r$ is a prime power. This immediately follows from the definition $g(r)$, as if $n$ is not such,
	and $r$ is the smallest prime power with $r^2 > n$, then $g(n)=g(r^2)$ and from the fact mentioned in the introduction that if $m \ge n$, then ${\rm PSD}(K_m) \ge {\rm PSD}(K_n)$ and similarly a perfect $3$-sequence covering array of $[m]$ induces a perfect $3$-sequence covering array of $[n]$ by keeping only the integers in $[n]$ in each permutation. So, we assume until the end of the lemma that $n=r^2 \ge 9$ and $r$ is a prime power.
	
	By induction we may assume the existence of a multiset $\F$ consisting of elements of $S_r$, of size $|\F|=g(r)$ that is both a perfect separating family of $K_r$ and a perfect $3$-sequence covering array of $[r]$.
	We must show that there exists a multiset $\F^*$ of elements of $S_{r^2}$ such that
	$|\F^*|=g(r^2)=2 \cdot (1+r)g(r)$ and such that $\F^*$ is both a perfect separating family of $K_{r^2}$
	and a perfect $3$-sequence covering array of $[r^2]$.
	
	Fix some affine plane $\{P_1,\ldots,P_{r+1}\}$ of order $r$ (which exists since $r$ is a prime power) where each parallel class $P_i$ is a partition of $[r^2]$ into parts (lines) $P_{i,1}\ldots,P_{i,r}$ and recall that each $P_{i,j}$ is viewed as an ordered set. The choice of ordering is arbitrary; any total order of the elements of $P_{i,j}$ is fine.
	
	We construct $\F^*$ as follows. Each $P_i$ gives rise to a multiset $W_i$ of elements of $S_{r^2}$.
	Furthermore, $W_i$ is the union of two multisets $Y_i$ and $Z_i$, each of size $g(r)$,
	hence $|W_i|=2g(r)$. We then define $\F^*$ to be the multiset obtained by taking all the $W_i$'s.
	Hence, $\F^*$ indeed contains $g(r^2)=2(r+1)g(r)$ elements.
	
	We next show how to construct $Y_i$.
	For $\sigma \in S_r$, define $q(P_{i,j},\sigma)=P_{i,j} \circ \sigma$ to be the permutation of $P_{i,j}$ corresponding to $\sigma$ (recall, for $1 \le k \le r$,  $P_{i,j}(k)$ is the element at location $k$ of the ordered set $P_{i,j}$, so $P_{i,j} \circ \sigma$ is well-defined).
	For $\sigma \in S_r$, let $q(P_i,\sigma)$ be the concatenation of
	$q(P_{i,\sigma(1)},\sigma),\ldots,q(P_{i,\sigma(r)},\sigma)$. We call each part of this concatenation a
	{\em block}, so $q(P_i,\sigma)$ consists of $r$ blocks of size $r$ each.
	We observe that $q(P_i,\sigma) \in S_{r^2}$ and set $Y_i=\{q(P_i,\sigma)\,|\, \sigma \in \F\}$.
	Indeed, $|Y_i|=|\F|=g(r)$, as required.
	
	$Z_i$ is defined analogously to $Y_i$, but the concatenation order of the blocks is reversed.
	More formally, for $\sigma \in S_r$ let $r(P_i,\sigma)$ be the concatenation of
	$q(P_{i,\sigma(r)},\sigma),\ldots,q(P_{i,\sigma(1)},\sigma)$ (note: we reverse the concatenation order of the blocks, but the order within each block is not reversed).
	We observe that $r(P_i,\sigma) \in S_{r^2}$ and set $Z_i=\{r(P_i,\sigma)\,|\, \sigma \in \F\}$.
	Indeed, $|Z_i|=|\F|=g(r)$, as required.

	As an example, consider the case $r=4$. We use here
	$\F=\{1234,1432,4231,2413,3412,3214\}$ which we constructed initially in the case $r=4$ and use the affine plane of Figure \ref{figure:1}. Suppose, say, that $i=3$ and $\sigma=(1432) \in \F$. Then
	$P_{3,1}=(4,5,10,13)$ so $q(P_{3,1},\sigma)=(4,13,10,5)$,
	$P_{3,2}=(1,6,12,16)$ so $q(P_{3,2},\sigma)=(1,16,12,6)$,
	$P_{3,3}=(2,8,9,15)$ so $q(P_{3,3},\sigma)=(2,15,9,8)$,
	$P_{3,4}=(3,7,11,14)$ so $q(P_{3,4},\sigma)=(3,14,11,7)$.
	Concatenating them as ordered by $\sigma$, the first block is $q(P_{3,1},\sigma)$, the second block is $q(P_{3,4},\sigma)$, the third block is $q(P_{3,3},\sigma)$, and the fourth block is $q(P_{3,2},\sigma)$.
	Hence, $q(P_3,\sigma)=(4,13,10,5,3,14,11,7,2,15,9,8,1,16,12,6)$.
	The corresponding $r(P_3,\sigma)=(1,16,12,6,2,15,9,8,3,14,11,7,4,13,10,5)$ as it reverses the order of the blocks of $q(P_3,\sigma)$.
	
	It remains to show that $\F^*$ is indeed a perfect separating family of $K_{r^2}$ (Lemma \ref{l:psca} below) and is also a perfect $3$-sequence covering array of $[r^2]$ (Lemma \ref{l:separating} below). It is important to stress that, although our goal in Theorem \ref{t:main} is to construct a perfect separating family, our construction needs the fact that it is also a perfect $3$-sequence covering array in order for the inductive proof to work (just assuming inductively that
	$\F$ is a prefect separating family of $K_r$ will not suffice to prove that $\F^*$ is such for $K_{r^2}$). Thus, given Lemmas \ref{l:psca} and \ref{l:separating}, the proof of Lemma \ref{l:main} is completed.
	\end{proof}
	
	\begin{lemma}\label{l:psca}
		$\F^*$ is a perfect $3$-sequence covering array of $[r^2]$.
	\end{lemma}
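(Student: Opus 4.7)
The plan is to count, for an arbitrary ordered triple $(a,b,c)$ of distinct elements of $[r^2]$, the number of $\tau\in\F^*$ in which $(a,b,c)$ appears as a subsequence, and to show this count depends only on $r$. Since $\F^*$ is the (multiset) union of the $W_i = Y_i \cup Z_i$ for $i=1,\dots,r+1$, I will handle each $W_i$ separately and then sum. Write $t$ for the multiplicity of the inductive family $\F$ as a perfect $3$-sequence covering array of $[r]$, so that $|\F|=6t$. As a standing lemma I will record that every ordered pair of distinct elements of $[r]$ appears as a subsequence in exactly $3t$ members of $\F$; this follows from the $3$-SCA property by extending each pair $(x,y)$ to the three sequences $(z,x,y),(x,z,y),(x,y,z)$ with $z\notin\{x,y\}$ and double-counting in $z$.

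Fix a parallel class $P_i$. For $x\in\{a,b,c\}$, let $j_x\in[r]$ be the index of the line of $P_i$ containing $x$, and let $x'_i\in[r]$ be its position within that line. The crucial structural remark is that in any $\tau = q(P_i,\sigma)$ or $\tau = r(P_i,\sigma)$, each of $a,b,c$ lies in the block with index $j_x$, the blocks are ordered by $\sigma$ in $q(P_i,\sigma)$ and by the reverse of $\sigma$ in $r(P_i,\sigma)$, and within each block the ordering is $\sigma$-induced. Hence whether $(a,b,c)$ is a subsequence of $\tau$ is determined by the block order together with the within-block positions, and splits into three cases according to the multiset $\{j_a,j_b,j_c\}$.

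In \emph{Case A} ($j_a=j_b=j_c$), all three elements share one block, and $Y_i$ and $Z_i$ each contribute $t$ by the inductive $3$-SCA property applied to $(a'_i,b'_i,c'_i)\in[r]^3$, for a total of $2t$. In \emph{Case C} ($j_a,j_b,j_c$ pairwise distinct), the three elements lie in three distinct blocks; $(a,b,c)$ is a subsequence of $q(P_i,\sigma)$ iff $(j_a,j_b,j_c)$ is a subsequence of $\sigma$, and of $r(P_i,\sigma)$ iff $(j_c,j_b,j_a)$ is, each giving $t$ by induction, so the total is $2t$. In \emph{Case B} (exactly two of $j_a,j_b,j_c$ coincide) there are three sub-cases. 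If $j_a=j_b\neq j_c$, the $Y_i$-count is the number of $\sigma$ with both $(j_a,j_c)$ and $(a'_i,b'_i)$ as subsequences, while the $Z_i$-count replaces $(j_a,j_c)$ by $(j_c,j_a)$; summing, the block-order constraint cancels and what remains is $\#\{\sigma:a'_i\text{ before }b'_i\}=3t$. The sub-case $j_b=j_c\neq j_a$ is symmetric and also contributes $3t$. The sub-case $j_a=j_c\neq j_b$ contributes $0$, because $a$ and $c$ then lie in a common block while $b$ lies in another, so in every concatenation (forward or reversed) $b$ is either strictly before or strictly after both of $a,c$, blocking the pattern $a<b<c$.

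Summing over $i=1,\dots,r+1$ finishes the proof. Recall that each unordered pair of points lies in the same line of exactly one parallel class. If $a,b,c$ are collinear, precisely one class is in Case A and the remaining $r$ are in Case C, for a total of $2t+r\cdot 2t=2(r+1)t$. If $a,b,c$ are not collinear, then the three classes associated with the pairs $\{a,b\},\{a,c\},\{b,c\}$ are in Case B and contribute $3t,0,3t$, while the remaining $r-2$ classes are in Case C and contribute $2t$ each, yielding $6t+(r-2)\cdot 2t = 2(r+1)t$ again. Thus $\F^*$ is a perfect $3$-sequence covering array of multiplicity $2(r+1)t$. The delicate point—and the main obstacle—is that the zero contribution in the sub-case $j_a=j_c\neq j_b$ of Case B is precisely what is needed to balance the extra $6t$ generated by the other two Case B sub-cases; both the introduction of $Z_i$ (which reverses only the block order) and the requirement that within-block orders remain $\sigma$-induced are essential for this cancellation to occur.
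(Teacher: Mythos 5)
Your proof is correct and follows essentially the same route as the paper's: the same per-parallel-class case analysis (all three points in one line; exactly one of the pairs $\{a,b\},\{b,c\},\{a,c\}$ collinear; all in distinct lines), the same counts $2t$, $3t$, $0$, $2t$ per $W_i$ (the paper writes these as $g(r)/3$, $g(r)/2$, $0$, $g(r)/3$ with $g(r)=6t$), and the same final tally $2(r+1)t$ in both the collinear and non-collinear situations. The only cosmetic difference is that you organize the argument per parallel class before splitting on collinearity, whereas the paper splits on collinearity first.
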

	\begin{proof}
	The lemma's proof is very similar to the proof of the construction given in \cite{yuster-2020}, but we repeat it for completeness (note, however, that the proof that $\F^*$ is a perfect separating family given later is more involved and is not needed in the proof given in \cite{yuster-2020}). So, let $a,b,c$ be three distinct integers in $[r^2]$.
	We must prove that $a,b,c$ is a subsequence of precisely $|\F^*|/6=g(r^2)/6$ elements of $\F^*$.
	There are two cases to consider: (i) $\{a,b,c\}$ is contained in some $P_{i,j}$.
	(ii) $\{a,b,c\}$ is not a subset of any $P_{i,j}$.
	
	Case (i): Suppose $\{a,b,c\}$ appears in some $P_{i,j}$.  Recall that $\F$ is a perfect $3$-sequence covering array of multiplicity $|\F|/6=g(r)/6$. In each $q(P_i,\sigma) \in Y_i$, all $\{a,b,c\}$ appear in the same block, namely block number $\sigma^{-1}(j)$. Inside the block, the internal order of $\{a,b,c\}$ is
	their order in $q(P_{i,j},\sigma)$. But by Lemma \ref{l:mult}, $P_{i,j}\F=\{q(P_{i,j},\sigma)\,|\,\sigma \in \F\}$
	is a perfect $3$-sequence covering array of $P_{i,j}$. Hence, $a,b,c$ is a subsequence of precisely
	$g(r)/6$ elements of $Y_i$. Similarly, $a,b,c$ is a subsequence of precisely $g(r)/6$ elements of $Z_i$
	(recall that the internal block order of each block of $r(P_i,\sigma)$ is the same as the internal block order of each block of $q(P_i,\sigma)$).
	Altogether $a,b,c$ is a subsequence of $g(r)/3$ elements of $W_i$.
	Now, if $i' \neq i$, then each of $a,b,c$ appears in a distinct part of $P_{i'}$ (recall, in an affine plane no pair appears twice in any of the parts). But in $q(P_{i'},\sigma)$ the order of these parts is determined by $\sigma$. Since $\F$ is a perfect $3$-sequence covering array, precisely in $g(r)/6$ of the elements of $Y_{i'}$, the part containing $a$ appears before the part containing $b$ and the part containing $b$ appears before the part containing $c$. Hence $a,b,c$ is a subsequence of precisely
	$g(r)/6$ elements of $Y_{i'}$. The same argument holds for $Z_{i'}$ as the parts just reverse their order (so $c$ appears before $b$ and $b$ appears before $a$ in $r(P_{i'},\sigma)$ if and only if $a$ is before $b$ and $b$ is before $c$ in $q(P_{i'},\sigma)$. Altogether we have that $a,b,c$ is a subsequence of precisely
	$(r+1)g(r)/3=g(r^2)/6$ elements of $\F^*$.
	
	Case(ii): Suppose $\{a,b,c\}$ is not a subset of any $P_{i,j}$. Let $\gamma$ be the unique index such that $\{a,b\}$ is a subset of some part of $P_{\gamma}$, let $\beta$ be the unique index such that $\{a,c\}$ is a subset of some part of $P_{\beta}$ and let $\alpha$ be the unique index such that $\{b,c\}$ is a subset of some part of $P_{\alpha}$. Note that $\alpha,\beta,\gamma$ are indeed unique and distinct as follows from the properties of an affine plane. As in the previous case, we have that if $i \notin \{\alpha,\beta,\gamma\}$ then $a,b,c$ is a subsequence of $g(r)/3$ elements of $W_i$.
	How many times is $a,b,c$ a subsequence in $W_\beta$? The answer is $0$, since in each element of $W_\beta$, $a$ and $c$ appear in the same block while $b$ appears in another block.
	How many times is $a,b,c$ a subsequence of $W_\alpha$? Since $b,c$ are in the same block of each element of $W_\alpha$ and since in precisely half of the elements of each of $Y_\alpha$ and  $Z_\alpha$, $b$ appears before $c$ (we use here the fact that a perfect $3$-sequence covering array is trivially also a perfect $2$-sequence covering array) we have that if $\sigma$ is such that in $q(P_\alpha,\sigma)$, $b$ appears before $c$, then in precisely one of $q(P_\alpha,\sigma)$ or $r(P_\alpha,\sigma)$, $a,b,c$ is a subsequence.
	Hence, $a,b,c$ is a subsequence of $g(r)/2$ elements of $W_\alpha$. The same argument holds for $W_\gamma$.
	Altogether we have that $a,b,c$ is a subsequence of precisely $(r-2)g(r)/3+0+2g(r)/2=(r+1)g(r)/3=g(r^2)/6$ elements of $\F^*$.
	\end{proof}

\begin{lemma}\label{l:separating}
	$\F^*$ is a perfect separating family of $K_{r^2}$.
\end{lemma}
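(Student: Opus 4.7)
The plan is to fix an arbitrary disjoint pair $e=\{a,b\}$, $f=\{c,d\}$ in $K_{r^2}$ and verify that $c_{\F^*}(\{e,f\})$ equals the constant $g(r^2)/3 = 2(r+1)g(r)/3$. Since $\F^*=\bigcup_{i=1}^{r+1}W_i$ with $W_i=Y_i\cup Z_i$, it suffices to compute the contribution $c_{W_i}(\{e,f\})$ of each parallel class $P_i$ and sum over $i$. The contribution depends only on the \emph{type} of distribution of $\{a,b,c,d\}$ among the lines of $P_i$: (T1) all four in one line; (T2) three in one line and one in another; (T3) two in each of two lines, with three sub-types T3a, T3b, T3c according to which perfect matching of $\{a,b,c,d\}$ is realized by the partition; (T4) two in one line and the remaining two in two distinct lines, with six sub-types according to which of the six pairs is paired; and (T5) all four in distinct lines.

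For each type I compute $c_{W_i}(\{e,f\})$ using three ingredients: (i) the internal order within a block $P_{i,j}\circ\sigma$ of $q(P_i,\sigma)$ or $r(P_i,\sigma)$ is governed by $\sigma$ applied to the positions of those vertices in the ordered line $P_{i,j}$; (ii) the block order of $q(P_i,\sigma)$ is $\sigma$ itself while that of $r(P_i,\sigma)$ is its reverse; and (iii) inductively, $\F$ is both a perfect $3$-sequence covering array of $[r]$ and a perfect separating family of $K_r$. The central technical move is that for T2 and the four ``mixed'' sub-types T4c, T4d, T4e, T4f --- where $q(P_i,\sigma)$ alone depends jointly on block order and an internal order --- pairing $q(P_i,\sigma)$ with $r(P_i,\sigma)$ cancels the internal-order dependence: exactly one of the two separates $e,f$ iff a $3$-element ordering condition on line-indices holds in $\sigma$ (e.g., ``the shared-line index $j$ is the median of $\{j,k,l\}$'' for T4c-T4f), which the 3-SCA property of $\F$ counts exactly. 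For T5 the condition reduces to the separation of $\{j,k\}$ from $\{l,m\}$ viewed as edges of $K_r$, so the inductive separating family property supplies the count. The resulting per-class contributions are: T1, T2, T5 each $2g(r)/3$; T3a is $2g(r)$ while T3b, T3c are $0$; T4a and T4b are each $4g(r)/3$; T4c, T4d, T4e, T4f are each $g(r)/3$.

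I then carry out a global case analysis on how the six pairs inside $\{a,b,c,d\}$ are distributed among the $r+1$ classes. Each pair lies in a unique class, and a short affine-plane argument shows that two pairs sharing a vertex can coincide in class only if their three vertices are collinear; hence the only possible coincidences among the six pair-assignments are the three perfect-matching equalities $a_{12}=a_{34}$, $a_{13}=a_{24}$, $a_{14}=a_{23}$. This limits the configurations to: (I) all four vertices collinear (one T1 and $r$ T5's); (II) exactly one triple collinear (one T2, three specific T4 sub-types determined by which triple, and $r-3$ T5's); and (III) no triple collinear, parametrised by how many of the three matching equalities hold (yielding zero to three T3 sub-types, some T4's for the unmatched pairs, and the remaining classes T5). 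A direct per-case tally gives the common total $(2r+2)g(r)/3$ in every case, completing the proof.

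The main obstacle is the bookkeeping. The per-type contributions are strikingly asymmetric --- T3a triples the baseline $2g(r)/3$ while T3b, T3c vanish; T4a and T4b double the baseline while T4c-T4f halve it --- so the precise cancellation across every allowed global configuration is delicate. It works only because the reversal multiset $Z_i$ is included in $W_i$: without it, the contributions for T2 and T4c-T4f would depend on joint $5$-element orderings in $\F$, which the inductive 3-SCA hypothesis cannot control. Organising the types and the resulting global cases is exactly what Tables \ref{table:types} and \ref{table:cases} in the full proof encode.
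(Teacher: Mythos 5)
Your proposal is correct and follows essentially the same route as the paper: classify each parallel class by the (sub)type of the partition it induces on $\{a,b,c,d\}$, compute the per-class contribution $c_{W_i}(\{e,f\})$ using the reversal pairing of $q(P_i,\sigma)$ with $r(P_i,\sigma)$ together with the inductive 3-SCA and separating-family properties, and then verify the constant total over the admissible global configurations; your per-type counts and case structure coincide exactly with Tables \ref{table:types} and \ref{table:cases}.
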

\begin{proof}
	Consider two disjoint edges
	$ab$ and $cd$ of $K_n$. We must show that precisely $|\F^*|/3=g(r^2)/3$ elements of $\F^*$ separate them.
	Here there are more cases to consider, according to the distribution of the $6$ pairs induced by $\{a,b,c,d\}$ among
	the parts of the affine plane. To effectively examine these cases, it is beneficial to classify how a parallel class $P_i$ of the affine plane distributes $a,b,c,d$ among its lines. This classification is made according to {\em types}, as given in Table \ref{table:types}. As can be seen from that table, there are seven possible types, up to isomorphism. A parallel class $P_i$ is of type $1$ if some line of it contains all of $a,b,c,d$.
	It is of type $2$ if some line contains three of $\{a,b,c,d\}$ and some other line contains the fourth
	vertex (in Table \ref{table:types} we write for type $2$ that $\{a,b,c\}$ is the triple contained in one line, but this is isomorphic to any other triple). It is of type $3$ if the edge $ab$ is in one line and the edge $cd$ is in another line. It is of type $4$ if precisely one of the edges $ab$ or $cd$ is in some line, and the other two vertices are each in a separate line. It is of type $5$ if two pairs are in two lines and neither of these pairs are the edges $ab$ or $cd$. It is of type $6$ if one pair is in some line and this pair is not one of the edges $ab$ or $cd$, and the other two vertices are each in a separate line. Finally, it is of type $7$ if each vertex is in a separate line.
	
	\begin{table}[ht!]
	\begin{center}
		\begin{tabular}{c||c|c} 
			type & distribution of $\{a,b,c,d\}$ among parts & \# of times $ab,cd$ separated in $W_i$\\
			\hline
			$1$ & $\{a,b,c,d\}$ & $2g(r)/3$\\
			$2$ & $\{a,b,c\}$ $\{d\}$ & $2g(r)/3$\\
			$3$ & $\{a,b\}$ $\{c,d\}$ & $2g(r)$ \\
			$4$ & $\{a,b\}$ $\{c\}$ $\{d\}$ & $4g(r)/3$\\
			$5$ & $\{a,c\}$ $\{b,d\}$ & $0$ \\
			$6$ & $\{a,c\}$ $\{b\}$ $\{d\}$ & $g(r)/3$ \\
			$7$ & $\{a\}$ $\{b\}$ $\{c\}$ $\{d\}$ & $2g(r)/3$\\
		\end{tabular}
	\end{center}
	\caption{The possible types (up to isomorphism) of a parallel class of an affine plane with respect to the two disjoint edges $ab,cd$. For each type, the number of times $ab,cd$ are separated in $W_i$ is given.}\label{table:types}
\end{table}
	
	To illustrate these types, consider the affine plane given in Figure  \ref{figure:1} and the parallel class $P_1$.
	Then $P_1$ is of type $1$ with respect, say, to the disjoint edges $12,34$. It is of type $2$ w.r.t. $12,35$. It is of type $3$ w.r.t. $12,56$. It is of type $4$ w.r.t. $12,59$. It is of type $5$ w.r.t. $15,26$. It is of type $6$ w.r.t. $15,29$. It is of type $7$ w.r.t. the disjoint edges whose union is, say, $\{1,5,9,13\}$.

	Having described the types, we can now consider all possible cases for a pair of disjoint edges $ab,cd$.
	For each of the $r+1$ parallel classes of the affine plane, we consider its type with respect to $ab,cd$,
	and count how many parallel classes are from each type. This gives a $7$-tuple of non-negative integers
	summing up to $r+1$, where the $j$'th tuple coordinate is the number of parallel classes of type $j$. Now,
	each possible $7$-tuple corresponds to a distinct {\em case}. The possible cases, together with their
	corresponding $7$-tuple, are given in Table \ref{table:cases}. As can be seen, there are eight
	possible cases (not all cases are necessarily realizable in every affine plane). For example, a pair of disjoint edges $ab,cd$ belongs to Case 1, if some parallel class has
	type 1 with respect to it (namely, all of $a,b,c,d$ are in the same line), and all other $r$ parallel classes are necessarily of type $7$, namely each of $a,b,c,d$ is in a distinct line in each of these classes.
	
	To illustrate some cases, consider again the affine plane given in Figure  \ref{figure:1}.
	The pair of edges $12,34$ belongs to Case 1, as $P_1$ is of type $1$ with respect to it and $P_2,P_3,P_4,P_5$
	are each of type $7$ with respect to it. The pair $12,35$ belongs to Case $2$, as $P_1$ is of type $2$ with respect to it, $P_4$ is of type $4$, $P_2,P_5$ are of type $6$, and $P_3$ is of type $7$.
	Similarly, one can check that the pair of edges $12,57$ belongs to Case $3$, the pair of edges $12,56$ belongs to Case $5$ and the pair $13,89$ to Case $7$.

\begin{table}[ht!]
	\begin{center}
		\begin{tabular}{c||c|c|c|c|c|c|c|} 
			case & type $1$ & type $2$ & type $3$ & type $4$ & type $5$ & type $6$ & type $7$ \\
			\hline
			$1$ & $1$ &  &  &  &  &  & $r$ \\
			$2$ &  & $1$  &  & $1$ &  & $2$ & $r-3$ \\
			$3$ &  &  & $1$  &  & $2$ &  & $r-2$ \\
			$4$ &  &  & $1$ &  & $1$ & $2$  & $r-3$ \\
			$5$ &  &  & $1$ &  &  & $4$ & $r-4$ \\
			$6$ &  &  &  & $2$ & $2$ &  & $r-3$ \\
			$7$ &  &  &  & $2$ & $1$ & $2$ & $r-4$ \\
			$8$ &  &  &  & $2$ &  & $4$ & $r-5$ \\
		\end{tabular}
	\end{center}
	\caption{The possible cases of how the pairs induced by $\{a,b,c,d\}$ given two disjoint edges $ab,cd$ are distributed among the parallel classes of an affine plane. For each type, the number of parallel classes of this type is listed, (empty cells correspond to zero).}\label{table:cases}
\end{table}

	Suppose now that $ab,cd$ is a pair of disjoint edges and $P_i$ is some parallel class. We compute
	the number of elements of $W_i$ that separate $ab$ and $cd$. We do this according to $P_i$'s type w.r.t.
	$ab,cd$.
	
	{\em $P_i$ is of type $1$}.
	In this case, $\{a,b,c,d\}$ appears in some $P_{i,j}$.  Recall that $\F$ is a perfect separating family of multiplicity $|\F|/3=g(r)/3$. In each $q(P_i,\sigma) \in Y_i$, all $\{a,b,c,d\}$ appear in the same block, namely block number $\sigma^{-1}(j)$. Inside the block, the internal order of $\{a,b,c,d\}$ is
	their order in $q(P_{i,j},\sigma)$. But by Lemma \ref{l:mult}, $P_{i,j}\F=\{q(P_{i,j},\sigma)\,|\,\sigma \in \F\}$
	is a perfect separating family of $P_{i,j}$. Hence, $ab$ and $cd$ are separated by precisely
	$g(r)/3$ elements of $Y_i$. Similarly, $a,b,c,d$ are separated by precisely $g(r)/3$ elements of $Z_i$
	(recall that the internal block order of each block of $r(P_i,\sigma)$ is the same as the internal block order of each block of $q(P_i,\sigma)$).
	Altogether $ab$ and $cd$ are separated by $2g(r)/3$ elements of $W_i$.
	
	{\em $P_i$ is of type $2$}. Without loss of generality, $\{a,b,c\}$ appears in some $P_{i,j}$ and $d$ appears in
	some $P_{i,k}$ with $k \neq j$. Since $\F$ is a perfect $3$-sequence covering array of multiplicity $g(r)/6$, in precisely $4g(r)/6$ of the elements of $q(P_i,\sigma) \in Y_i$, the block containing $a,b,c$ does
	not have $c$ between $a$ and $b$ (so the subsequence on $\{a,b,c\}$ is one of $(a,b,c),(b,a,c),(c,a,b),(c,b,a)$).
	For each such element, precisely one of $q(P_i,\sigma)$ or $r(P_i,\sigma)$, separates $ab$ from $cd$
	(for example, if the subsequence on $\{a,b,c\}$ is $(b,a,c)$, we would like the block containing $d$ to appear after the block containing $a,b,c$, and this happens in precisely one of $q(P_i,\sigma)$ or $r(P_i,\sigma)$).
	Altogether $ab$ and $cd$ are separated by $4g(r)/6=2g(r)/3$ elements of $W_i$.
	
	{\em $P_i$ is of type $3$}. Here $\{a,b\}$ is in some $P_{i,j}$ and $\{c,d\}$ are in some $P_{i,k}$ with $k \neq j$. So each element of $W_i$ separates $a,b$ from $c,d$. Altogether $ab$ and $cd$ are separated by $|W_i|=2g(r)$ elements of $W_i$.
	
	{\em $P_i$ is of type $4$}. Here $\{a,b\}$ is in some $P_{i,j}$, $c \in P_{i,k}$ and $d \in P_{i,\ell}$ where $j,k,\ell$ are distinct. In order for $a,b$ to be separated from $c,d$ in $q(P_i,\sigma) \in Y_i$, we would like the block containing $a,b$ to not be in between the block containing $c$ and the block containing $d$.
	But since $\F$ is a $3$-sequence covering array, this occurs in precisely $2g(r)/3$ elements of $Y_i$,
	and the same holds for $Z_i$. Altogether $ab$ and $cd$ are separated by $4g(r)/3$ elements of $W_i$.
	
	{\em $P_i$ is of type $5$}. Here $\{a,c\}$ is in some $P_{i,j}$ and $\{b,d\}$ is in some $P_{i,k}$ with $k \neq j$. In this case, no element of $W_i$ separates $ab$ from $cd$ as $a,b$ are always in the same block and $c,d$ are always in the same block of each element of $W_i$.
	
	{\em $P_i$ is of type $6$}. Here $\{a,c\}$ is in some $P_{i,j}$, $b \in P_{i,k}$ and $d \in P_{i,\ell}$ where $j,k,\ell$ are distinct. The only way that $ab$ and $cd$ could be separated by an element of $W_i$ is if the block containing $a,c$ is in between the two blocks containing $b$ and $d$. As $\F$ is a perfect $3$-sequence covering array, this occurs in precisely $g(r)/3$ of the elements of $Y_i$. For each such element
	$q(P_i,\sigma)$, if $a$ is before $c$, we would like the block containing $b$ to be before the block containing $d$ (recall that the block containing $a,c$ is in between them). If this does not occur in $q(P_i,\sigma)$
	then it necessarily occurs in $r(P_i,\sigma)$ and vice versa. Similarly, if $c$ is before $a$
	we would like the block containing $d$ to be before the block containing $b$. If this does not occur in $q(P_i,\sigma)$ then it necessarily occurs in $r(P_i,\sigma)$ and vice versa.
	Altogether $ab$ and $cd$ are separated by $g(r)/3$ elements of $W_i$.
	
	{\em $P_i$ is of type $7$}. Here each of $a,b,c,d$ is in a distinct block. Since $\F$ is a perfect separating set, we have that in $Y_i$, the two blocks containing $a$ and $b$ are separated from the blocks
	containing $c$ and $d$ precisely $g(r)/3$ times. The same holds for $Z_i$. 
	Altogether $ab$ and $cd$ are separated by $2g(r)/3$ elements of $W_i$.
	
	We have summarized the analysis above in the third column of Table \ref{table:types}.
	It is now immediate to verify that every pair $ab,cd$ of disjoint edges is separated precisely
	$|F^*|/3=g(r^2)/3$ times in $\F^*$. Indeed, for each possible case from Table  \ref{table:cases}, if we take the scalar product of its corresponding $7$-tuple given in Table \ref{table:cases} and the third column of Table \ref{table:types}, we obtain $2(r+1)g(r)/3=g(r^2)/3$. For instance, in Case $7$ we obtain
	$g(r)/3$ times the scalar product $(0,0,0,2,1,2,r-4) \cdot (2,2,6,4,0,1,2)$ giving $(2 r+2)g(r)/3$.
	The other cases are similarly verified.
\end{proof}

The next lemma is a consequence of Lemma \ref{l:main} and is the upper bound part of Theorem \ref{t:main}.
\begin{lemma}
	Let $\epsilon > 0$. There is a constant $C=C(\epsilon)$ such that for all $n \ge 3$,
	${\rm PSD}(K_n) \le Cn(\log n)^{1+\epsilon}$.
\end{lemma}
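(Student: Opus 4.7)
The plan is to extract the upper bound directly from Lemma \ref{l:main} by analyzing the recursion that defines $g$, namely $g(r^2) = 2(1+r)g(r)$ for $r$ a prime power. Since $g$ is monotone non-decreasing and ${\rm PSD}(K_n)\le g(n)$, it suffices to prove that $g(n) \le Cn(\log n)^{1+\epsilon}$, and moreover it is enough to verify this when $n=r^2$ for some prime power $r$, because for other $n$ the value $g(n)$ is defined to equal $g(r^2)$ for the smallest such prime power.

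I would proceed by strong induction on $n$. Assume the bound for all smaller arguments and consider $n=r^2$. Unfolding the recursion and applying the hypothesis to $r$ gives
$$
g(n) \le 2(1+r)\cdot Cr(\log r)^{1+\epsilon}.
$$
The inductive step therefore closes as soon as
$$
\frac{2r(r+1)}{n}\cdot\left(\frac{\log r}{\log n}\right)^{1+\epsilon} \le 1.
$$
In the idealized situation $r=\sqrt{n}$, this ratio equals $2\cdot(1+o(1))\cdot 2^{-(1+\epsilon)}(1+o(1))=2^{-\epsilon}(1+o(1))$, which is strictly less than $1$ for any fixed $\epsilon>0$ and $n$ large enough.

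The main obstacle is that when $n$ is not itself the square of a prime power, one must round up to the smallest prime power $r$ with $r^2\ge n$, which makes $r^2/n$ possibly larger than $1$. Bertrand's postulate alone only gives $r^2/n < 4$, which would overwhelm the factor $2^{-(1+\epsilon)}$ for small $\epsilon$ and prevent the induction from closing. The remedy is a stronger density statement: by the prime number theorem, for any $\delta>0$ and all sufficiently large $m$ the interval $[m, m(1+\delta)]$ contains a prime, since its prime count is asymptotic to $\delta m/\log m \to \infty$. Applying this at $m=\sqrt{n}$ with $\delta$ chosen so that $(1+\delta)^2 < 2^{\epsilon}$ (any $\delta<2^{\epsilon/2}-1$ works), and using monotonicity in the form ${\rm PSD}(K_n)\le g(r^2)$ for this prime $r$, one gets $r^2/n\le (1+\delta)^2<2^{\epsilon}$.

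Combining these estimates, the ratio above is bounded by $2^{\epsilon}\cdot 2^{-(1+\epsilon)}(1+o(1))=\tfrac12(1+o(1))<1$ for all $n\ge n_0(\epsilon)$, closing the induction with constant room to spare. Finally, since $g(n)$ is finite for each particular $n$, the constant $C=C(\epsilon)$ can be chosen large enough to absorb the finitely many base cases $3\le n\le n_0(\epsilon)$, completing the proof.
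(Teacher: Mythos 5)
Your proposal is correct and follows essentially the same route as the paper: induct on the recursion $g(r^2)=2(1+r)g(r)$ and control the rounding loss $r^2/n$ via a prime-gap result stronger than Bertrand's postulate (the paper invokes the Baker--Harman--Pintz theorem to get $r\le\sqrt n+n^{1/3}$, whereas your appeal to the prime number theorem to get a prime $r\le(1+\delta)\sqrt n$ with $(1+\delta)^2<2^{\epsilon}$ is equally sufficient). One arithmetic slip in the final combination: the ratio is $2(1+\delta)^2\,2^{-(1+\epsilon)}(1+o(1))=(1+\delta)^2\,2^{-\epsilon}(1+o(1))$, not $\tfrac12(1+o(1))$; this is still a constant strictly below $1$ because you fixed $\delta$ strictly below $2^{\epsilon/2}-1$, so the induction closes, just with less room to spare than you claim.
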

\begin{proof}
	We require the following result of Baker et al. \cite{BHP-2001} which 
	states that there is always a prime strictly between $x$ and $x+O(x^{21/40})$.
	In particular, for all sufficiently large $n$, there is a prime between $\sqrt{n}$ and $\sqrt{n}+n^{1/3}$.
	
	Given $\epsilon > 0$, let $\delta = 2^{\epsilon/4}-1$.
	Let $N$ be the least integer for which for all $n \ge N$, the following hold.
	\begin{enumerate}
	\item[(i)]
	$\sqrt{n}+n^{1/3}+1 \le (1+\delta)\sqrt{n}$.
	\item[(ii)]
	If $r$ is the smallest prime power satisfying $r^2 \ge n$ then $\sqrt{n} \le r \le \sqrt{n}+n^{1/3}$.
	\item[(iii)]
	$(\log(\sqrt{n}+n^{1/3}))^{1+\epsilon} \le (\log n)^{1+\epsilon}/2^{1+\epsilon/2}$.
	\end{enumerate}

	Let $C=g(N)$ and observe that indeed $C=C(\epsilon)$.
	By Lemma \ref{l:main} it suffices to prove that $g(n) \le Cn(\log n)^{1+\epsilon}$.
	Indeed, this clearly holds for all $3 \le  n \le N$ since $g(n) \le g(N)=C$ in this case.
	Assume now that $n > N$ and assume inductively that the claim holds for values smaller than $n$.

	Let $r$ be the smallest prime power satisfying $r^2 \ge n$.
	So, we have that $\sqrt{n} \le r \le \sqrt{n}+n^{1/3}$.
	Now, by Lemma \ref{l:main} we have that
	$g(n) = g(r^2) =2(r+1)g(r)$.
	By the induction hypothesis,
	\begin{align*}
	g(n) & \le  2(r+1)Cr(\log r)^{1+\epsilon}\\
	& \le 2(\sqrt{n}+n^{1/3}+1)C(\sqrt{n}+n^{1/3})(\log (\sqrt{n}+n^{1/3}))^{1+\epsilon}\\
	& \le 2(1+\delta)^2 n C (\log n)^{1+\epsilon}/2^{1+\epsilon/2}\\
	& = (1+\delta)^2 n C (\log n)^{1+\epsilon}/2^{\epsilon/2}\\
	& = Cn(\log n)^{1+\epsilon}\;.
	\end{align*}
\end{proof}

\section{A lower bound for a perfect separating family of $K_n$}

The following lemma establishes the lower bound in Theorem \ref{t:main}.
\begin{lemma}\label{l:lower}
	Let $n \ge 4$ and let $\F$ be a perfect separating family of $K_n$. Then $|\F| \ge n/2-1$.
\end{lemma}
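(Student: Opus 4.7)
Set $s = |\F|$ and let $\lambda$ denote the common multiplicity of $\F$. A standard double count comes first: each $\sigma \in \F$ contributes exactly one separated pairing to each 4-subset of vertices, so $\sigma$ separates exactly $\binom{n}{4}$ of the $3\binom{n}{4}$ disjoint edge pairs. Perfectness then gives $s\binom{n}{4} = 3\lambda\binom{n}{4}$, hence $s = 3\lambda$.

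The strategy for the lower bound is to control the set
\[
\text{Ext}(\F) = \{\sigma_1, \sigma_n : \sigma \in \F\}
\]
of vertices that appear as either the first or last element of some $\sigma \in \F$. Since each $\sigma$ contributes at most two distinct vertices to this set, $|\text{Ext}(\F)| \le 2s$. Thus it suffices to establish the claim $|\text{Ext}(\F)| \ge n-2$ (i.e., at most two vertices can be ``interior'' in the sense of never appearing at position $1$ or position $n$), which immediately yields $s \ge (n-2)/2 = n/2 - 1$.

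To prove this claim I would proceed by contradiction, assuming three vertices $a,b,c$ are interior, so that $p_\sigma(a), p_\sigma(b), p_\sigma(c) \in \{2,\ldots,n-1\}$ for every $\sigma \in \F$. The key structural observation (call it the \emph{Partner Lemma}) is the following: for any $\sigma$ and any fourth vertex $d \notin \{a,b,c\}$, let $y_1 y_2 y_3$ denote the induced order of $\{a,b,c\}$ in $\sigma$. Then in the 4-tuple order $x_1 x_2 x_3 x_4$ of $\{a,b,c,d\}$ in $\sigma$, the partner of $d$ in the separated pairing $\{\{x_1,x_2\},\{x_3,x_4\}\}$ is always $y_1$ (when $p_d < p_{y_2}$) or $y_3$ (when $p_d > p_{y_2}$), and \emph{never} the median $y_2$. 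This follows directly from the fact that inserting $d$ into the ordered triple $(y_1,y_2,y_3)$ places $d$ in a pair with one of the two extremes $y_1,y_3$, but never with $y_2$.

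Using the Partner Lemma together with perfectness (for each $d \notin \{a,b,c\}$ and each $x \in \{a,b,c\}$, exactly $\lambda$ permutations in $\F$ pair $d$ with $x$), I would translate the interiority assumption into a system of linear identities involving the positions of the median $y_2$ across $\sigma \in \F$. Concretely, summing over $d \in V \setminus \{a,b,c\}$ the equation for partner~$x$ yields
\[
\lambda(n-3) \;=\; \sum_{\sigma : x \text{ first of } \{a,b,c\}}(p_{y_2(\sigma)} - 2) \;+\; \sum_{\sigma : x \text{ last of } \{a,b,c\}}(n - p_{y_2(\sigma)} - 1),
\]
one identity for each $x \in \{a,b,c\}$. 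The goal is to combine these three identities with the constraint $p_{y_2(\sigma)} \in \{3,\ldots,n-2\}$ (forced by the interiority of $a,b,c$) to derive a numerical inconsistency. The main obstacle I foresee is that the most obvious aggregations of these identities collapse back to the trivial relation $s = 3\lambda$; extracting a strict contradiction will likely require either a pairwise comparison of the three equations, or exploiting multiple overlapping triples chosen from the assumed ($\geq 3$-element) interior set to chain the Partner Lemma across different triples.
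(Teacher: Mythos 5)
Your reduction is sound as far as it goes: the identity $|\F|=3\lambda$ is correct, the Partner Lemma is correct, and if you could show that at most two vertices never occupy position $1$ or position $n$, then counting end-slots would indeed give $|\F|\ge (n-2)/2$. The problem is that this central claim is precisely what you do not prove, and the tools you set up cannot prove it. The three identities you derive are mutually consistent with the interiority hypothesis: for example, taking every median position $p_{y_2(\sigma)}$ equal to $(n+1)/2$ and each $x\in\{a,b,c\}$ to be first among $\{a,b,c\}$ in exactly $\lambda$ permutations and last in exactly $\lambda$ permutations satisfies all three identities together with the constraint $p_{y_2(\sigma)}\in\{3,\dots,n-2\}$. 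So no pairwise comparison or aggregation of these equations can produce a contradiction; any completion must import genuinely new information, and it is not even clear that the claim itself is true --- nothing in the definition of a perfect separating family obviously forbids three vertices from always sitting strictly inside every permutation. As written, the proposal is a plan with an acknowledged hole at its only nontrivial step, and the hole is at least as hard as the lemma itself.

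For comparison, the paper's proof sidesteps this by working with \emph{pairs} occupying the first two or last two positions rather than single vertices at the endpoints: a short double count shows that at most six pairs can be extremal in $\lambda$ members of $\F$, so for $n\ge 5$ some pair $\{u,v\}$ is non-extremal. It then forms the $(n-2)\times(n-2)$ matrix $C=AA^T+BB^T$, where $A$ and $B$ record which vertices precede, respectively follow, both of $u,v$ in each $\sigma\in\F$; perfection forces every off-diagonal entry of $C$ to equal $\lambda$, non-extremality of $\{u,v\}$ forces every diagonal entry to exceed $\lambda$ strictly, hence $C$ is non-singular and $n-2=rank(C)\le rank(AA^T)+rank(BB^T)\le 2|\F|$. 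If you wish to salvage your approach, you need an argument of comparable strength for the interior-vertex claim; the linear identities extracted from a single triple will not suffice.
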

\begin{proof}
	As the lemma holds trivially for $n=4$, we assume that $n \ge 5$.
	For a permutation $\sigma=(a_1,\ldots,a_n)$ of vertices of $K_n$, we say that the pairs $\{a_1,a_2\}$ and $\{a_{n-1},a_n\}$ are extremal in $\sigma$. Let $\lambda=|\F|/3$ be the multiplicity of $\F$.
	We say that a pair $\{u,v\}$ of vertices is {\em extremal} in $\F$ if $\{u,v\}$ is extremal in $\lambda$ elements of $\F$. As each $\sigma \in \F$ has two pairs that are extremal in $\sigma$, the overall count of extremal pairs is $2|\F|=6\lambda$, so there can be at most $6$ pairs that are extremal in $\F$.
	But since $n \ge 5$, there are more than $6$ pairs of vertices so let $\{u,v\}$ be a pair of vertices that is {\em not} extremal in $\F$. We may assume by relabeling that $\{u,v\}=\{n-1,n\}$.

We next define several matrices over ${\mathbb R}$.
Let $A$ be the matrix whose rows are indexed by $[n-2]$ (i.e. all vertices except $u,v$) and whose columns are
indexed by $\F$. Set $A[w,\sigma]=1$ if  $\sigma^{-1}(w) < \min \{\sigma^{-1}(u),\sigma^{-1}(v)\}$.
Otherwise, set $A[w,\sigma]=0$. Let $B$ be the matrix whose rows are indexed by $[n-2]$ and whose columns are
indexed by $\F$. Set $B[w,\sigma]=1$ if  $\sigma^{-1}(w) > \max \{\sigma^{-1}(u),\sigma^{-1}(v)\}$.
Otherwise, set $B[w,\sigma]=0$. Clearly,
$$
rank(AA^T) \le rank(A) \le |\F| ~,~~~ rank(BB^T) \le rank(B) \le |\F|\;. 
$$
Let $C=AA^T+BB^T$. So, $C$ is a square matrix of order $n-2$. We prove that $C$ is non-singular.
Notice that this suffices since if so, we have that
\begin{equation}\label{e:triv}
rank(C) \le rank(AA^T)+rank(BB^T) \le 2|\F|
\end{equation}
implying that $|\F| \ge n/2-1$.

So it remains to prove that $C$ is non-singular.
Consider first some off-diagonal entry of $AA^T$, say $(AA^T)[x,y]$. Then this entry counts the number of elements of $\sigma \in \F$ in which both vertices $x$ and $y$ appear in $\sigma$ before $u,v$.
Similarly, $(BB^T)[x,y]$ counts the number of elements of $\sigma \in \F$ in which both vertices $x$ and $y$ appear in $\sigma$ after $u,v$. Therefore, since $\F$ is a perfect separating family, we have that
$(AA^T)[x,y]+(BB^T)[x,y]=\lambda$. Hence, all the off-diagonal entries of $C$ equal $\lambda$.

A diagonal entry of $C$, say $C[w,w]$ equals the number of elements $\sigma \in \F$ in which $w$ is either before both $u,v$ or after both $u,v$. But notice that if $w' \in [n]-\{u,v,w\}$ then $w'w$ is separated from $uv$ precisely $\lambda$ times, and we therefore have, in particular, that $C[w,w] \ge \lambda$.
We claim, however, that $C[w,w] > \lambda$.
Assume otherwise, that $C[w,w]=\lambda$. So, there is a subset $\F_w \subset \F$ of precisely $\lambda$ elements of $\F$ in which $w$ is either before both $u$ and $v$ or after both $u$ and $v$.
Consider again some $w' \in [n]-\{u,v,w\}$.
For any $\sigma \in F \setminus \F_w$, we have that $w$ is in between $u$ and $v$ and therefore $w'w$ is not separated from $uv$ by $\sigma$.
As $w'w$ and $uv$ need to be separated $\lambda$ times, this implies that precisely all elements of $\F_w$ separate
$w'w$ and $uv$. Hence, $w'$ and $w$ must both either be before both $u$ and $v$ or else both be after both $u$ and $v$. As this holds for all $w' \in [n]-\{u,v,w\}$, this means that $\{u,v\}$ is extremal in each $\sigma \in F_w$.
Hence $\{u,v\}$ is extremal in $\F$ precisely $\lambda$ times, contradicting the choice of $\{u,v\}$.
We have proved that $C[w,w] > \lambda$. for all $w \in [n-2]$.

To summarize, $C$ is a matrix whose off-diagonal entries are all equal to $\lambda$ and whose diagonal entries are all greater than $\lambda$. It is an easy exercise that such matrices are non-singular.

\end{proof}

\section{Perfect separation dimension of random graphs}

The following definition plays a central role in the proof of Theorem \ref{t:rand-lower}.
\begin{definition}[minrank]\label{def:minrank}
The {\em minrank} of a graph $G$ on vertex set $[n]$ over a field ${\mathbb F}$, denoted
${\rm minrank}_{\mathbb F}(G)$,
is the smallest possible rank of a matrix $M \in {\mathbb F}^{n \times n}$ with nonzero diagonal entries
such that $M[u,v] = 0$ for any pair $u,v$ of distinct nonadjacent vertices of $G$.
\end{definition}
The notion of minrank (over ${\mathbb R}$) was first studied by Lov\'asz \cite{lovasz-1979} and has found several applications, see \cite{ABG+-2020,GRW-2018}. We need the following result of Alon et al. \cite{ABG+-2020} regarding the minrank of a random graph over ${\mathbb R}$ (in fact, over every field), extending an earlier similar result over finite fields of Golovnev, Regev, and Weinstein \cite{GRW-2018}.
\begin{lemma}[\cite{ABG+-2020}]\label{l:minrank}
	Assume that $q = q(n)$ satisfies $1/n \le q \le 1$. Then w.h.p.
	$$
	{\rm minrank}_{\mathbb R}(G(n,q)) \ge \frac{n \log(1/q)}{80\log n}\;.
	$$
\end{lemma}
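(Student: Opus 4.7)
The plan is to adapt the finite-field argument of Golovnev, Regev, and Weinstein \cite{GRW-2018} to the real case, using tools from real algebraic geometry to replace the direct counting of matrices that is available over $\mathbb{F}_q$. Any graph with $\mathrm{minrank}_{\mathbb{R}}(G)\le r$ arises from some $M \in \mathbb{R}^{n\times n}$ of rank at most $r$ with nonzero diagonal; factoring $M = A B^{\top}$ with $A,B \in \mathbb{R}^{n\times r}$ expresses each entry $M[i,j]=\langle a_i,b_j\rangle$ as a bilinear form in the $2nr$ real parameters encoding $A$ and $B$. Over $\mathbb{F}_q$ one would simply count these factorizations (at most $q^{2nr}$ of them) and bound $\sum_{M}q^{|E(M)|}$, where $|E(M)|$ is the number of off-diagonal nonzero entries of $M$ (each of which must be an edge of $G(n,q)$). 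Over $\mathbb{R}$ the parameter space is continuous, so the counting is replaced by a bound on the number of distinct \emph{zero patterns} realizable by rank-$\le r$ matrices.

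The central step would be to invoke the Warren/Milnor--Thom theorem: the number of sign patterns realized by $m$ real polynomials of degree $d$ in $N$ variables is at most $(Cmd/N)^{N}$ when $m \ge N$. Applied to the $n(n-1)$ off-diagonal bilinear forms $\langle a_i,b_j\rangle$ in $2nr$ variables, this yields at most $n^{O(nr)}$ zero patterns achievable by rank-$\le r$ matrices. For each such pattern $Z$ with exactly $s$ off-diagonal zeros, the graph $G(n,q)$ is compatible with $Z$ (every non-edge of $G$ lies inside $Z$) with probability exactly $q^{n(n-1)-s}$, since the $n(n-1)-s$ off-diagonal positions outside $Z$ must each be an edge of $G$. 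A union bound, stratified by $s$, should then control the probability that $\mathrm{minrank}_{\mathbb{R}}(G(n,q))\le r$. Setting $r = n\log(1/q)/(80\log n)$ and carrying the arithmetic through — with the constant $80$ absorbing constants from Warren's bound and from the stratification — should make this probability $o(1)$.

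The main obstacle will be the stratification step, since a crude estimate $n^{O(nr)}\cdot \max_s q^{n(n-1)-s}$ is achieved at $s$ close to $n(n-1)$ and gives a vacuous bound. One needs a refined algebraic count showing that rank-$\le r$ matrices with many off-diagonal zeros are constrained to strictly lower-dimensional subvarieties of the rank locus, so that for each sparsity level $s$ the number of achievable zero patterns decays geometrically in $s$. This refined interplay between matrix rank and zero-pattern complexity — controlling how rigidly the zeros of a low-rank real matrix can be placed — is the delicate heart of the Alon--Balla--Gishboliner--Mond--Mousset argument. Once it is in hand, summing the probabilities over all strata yields a geometric series bounded by the single worst-case term, and the stated lower bound on $\mathrm{minrank}_{\mathbb{R}}(G(n,q))$ follows by contradiction.
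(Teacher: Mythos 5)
This lemma is not proved in the paper at all: it is quoted verbatim from \cite{ABG+-2020} and used as a black box, so there is no internal proof to compare your sketch against. Judged on its own terms, your opening moves do match the skeleton of the known argument --- factor $M=AB^{\top}$ with $A,B\in\mathbb{R}^{n\times r}$, bound the number of realizable zero patterns of the $n(n-1)$ off-diagonal bilinear forms in $2nr$ variables (over $\mathbb{R}$ via Warren; over general fields \cite{ABG+-2020} needs the R\'onyai--Babai--Ganapathy zero-pattern theorem), and union-bound over patterns. But the write-up does not close, and you say so yourself: the entire difficulty is deferred to an unproved ``refined algebraic count'' showing the number of realizable patterns decays geometrically in the sparsity $s$. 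That deferred step is not a technical refinement; it is the theorem.

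Two concrete problems. First, the per-pattern probability ``exactly $q^{n(n-1)-s}$'' counts ordered matrix positions rather than unordered vertex pairs: the correct exponent is the number of pairs $\{i,j\}$ with at least one of $(i,j),(j,i)$ outside $Z$, which can be as small as $(n(n-1)-s)/2$, so as written you \emph{underestimate} the compatibility probability and the union bound is invalid (repairable, but it must be repaired). Second, and more seriously, there is a standard lemma that handles the dangerous sparse strata --- if $M$ has nonzero diagonal and rank at most $r$, the graph with $i\sim j$ iff $M[i,j]\neq 0$ or $M[j,i]\neq 0$ has independence number at most $r$ (a diagonal principal submatrix with nonzero diagonal certifies rank), so by Tur\'an it has at least $n(n-r)/(2r)$ edges and the patterns with $s$ near $n(n-1)$ are simply not realizable --- yet even with this inserted, the global union bound you propose provably fails at $r=\Theta(n\log(1/q)/\log n)$: for constant $q$ the pattern count $(Cn/r)^{2nr}$ is $\exp\bigl(\Theta(n^{2}\log\log n/\log n)\bigr)$ while the per-pattern probability is only $\exp\bigl(-\Theta(n\log n)\bigr)$. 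So the missing ingredient is not a sharper count of patterns per sparsity level; the argument of \cite{GRW-2018,ABG+-2020} must localize the zero-pattern counting to much smaller sub-configurations before the probabilities can beat the entropy, and that localization is exactly what your sketch leaves as a placeholder. As it stands this is a reasonable reconstruction of the setup, not a proof.
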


Before proving Theorem \ref{t:rand-lower}, we need to establish some properties of $G(n,p)$ that hold w.h.p.
\begin{lemma}\label{l:prop1}
	Let $t=\lceil 3\log n/p \rceil$. It holds w.h.p. in $G(n,p)$ that for any two disjoint vertex sets $A,B$ of order $t$ each, there is an edge with one endpoint in $A$ and one endpoint in $B$.
\end{lemma}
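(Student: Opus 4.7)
The plan is to run a standard first-moment computation. Fix any two disjoint sets $A,B \subseteq [n]$ with $|A|=|B|=t$. In $G(n,p)$ each of the $t^2$ potential edges between $A$ and $B$ is present independently with probability $p$, so the probability that no such edge exists equals $(1-p)^{t^2}$. Using the standard estimate $1-p \le e^{-p}$, this is at most $\exp(-pt^2)$.

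Next I would count: the number of ordered pairs $(A,B)$ of disjoint $t$-subsets of $[n]$ is at most $\binom{n}{t}^2 \le n^{2t}$. By the union bound, the probability that \emph{some} such pair has no crossing edge is at most
\[
n^{2t}\,\exp(-p t^{2})\;=\;\exp\bigl(2t\log n - p t^2\bigr).
\]
Because $t = \lceil 3\log n/p \rceil$, we have $pt \ge 3\log n$, so $pt^2 \ge 3t\log n$. Substituting gives an upper bound of $\exp(2t\log n - 3t\log n) = \exp(-t\log n) = n^{-t}$, which tends to $0$ as $n\to\infty$ (indeed $t \ge 1$).

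Therefore, with high probability, for every pair of disjoint sets $A,B$ of size $t$ there is at least one edge between them. There is really no obstacle here: the only thing to be careful about is plugging in the precise value of $t$ so that the $pt^2$ term beats the $2t\log n$ entropy term by a factor at least $t\log n$, which is exactly what the constant $3$ in the definition of $t$ is chosen to guarantee.
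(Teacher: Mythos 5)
Your proposal is correct and follows essentially the same route as the paper: a union bound over the at most $n^{2t}$ pairs of disjoint $t$-sets, bounding the failure probability for each pair by $(1-p)^{t^2}$ and using the choice of $t$ to get a total bound of $n^{-t}=o(1)$. The only cosmetic difference is that you pass through $1-p\le e^{-p}$ while the paper substitutes $p\ge 3\log n/t$ directly into $(1-p)^{t^2}$.
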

\begin{proof}
	Consider two disjoint vertex sets $A,B$ of order $t$ each. The probability that no edge connects a vertex of $A$ with a vertex of $B$ is $(1-p)^{t^2}$. As there are less than $n^{2 t}$ choices for $A$ and $B$, the probability that there is some pair violating the stated property is at most
	$$
	n^{2 t} (1-p)^{t^2} \le n^{2 t}\left(1-\frac{3\log n}{t}\right)^{t^2} \le n^{-t} = o(1)\;.
	$$
\end{proof}
\begin{lemma}\label{l:prop2}
	W.h.p. it holds that any set $X$ of at least $9\log n/p$ vertices of $G(n,p)$ induces a connected subgraph of order at least $|X|/3$.
\end{lemma}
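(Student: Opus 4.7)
The plan is to reduce the statement to Lemma~\ref{l:prop1} by a clean partitioning argument, so essentially all the probabilistic work has already been done. First I would condition on the high-probability event supplied by Lemma~\ref{l:prop1}, namely that for every pair of disjoint vertex sets of size $t = \lceil 3\log n/p \rceil$ in $G(n,p)$, at least one edge crosses the pair. Everything that follows is deterministic given this event.

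Next, fix an arbitrary vertex set $X$ of size $s \ge 9\log n/p$, and suppose for contradiction that every connected component of the induced subgraph $G[X]$ has fewer than $s/3$ vertices. I would then partition the components of $G[X]$ into two groups $A$ and $B$ greedily: add whole components to $A$ one at a time until $|A| \ge s/3$, and place all remaining components in $B$. Since the component added in the final step had fewer than $s/3$ vertices, the threshold is overshot by at most $s/3$, giving $s/3 \le |A| < 2s/3$, and hence $|B| = s - |A| > s/3$ as well.

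Now $|A|, |B| \ge s/3 \ge 3\log n/p \ge t$, and by construction no edge of $G(n,p)$ joins $A$ to $B$, since $A$ and $B$ are unions of distinct connected components of $G[X]$. Picking arbitrary subsets of $A$ and $B$ of size exactly $t$ then directly contradicts the event of Lemma~\ref{l:prop1}. So w.h.p.\ every such $X$ has a connected component of order at least $|X|/3$, which is the claim.

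The only mildly delicate step is the partitioning argument, where one needs both $|A| \ge s/3$ and $|B| \ge s/3$ simultaneously; this is exactly what the hypothesis that every component has size less than $s/3$ buys. After that the proof is a direct appeal to Lemma~\ref{l:prop1}, with no further probabilistic content, so I do not anticipate any real obstacle.
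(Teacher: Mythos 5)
Your proof is correct and follows essentially the same route as the paper: assume every component of $G[X]$ has fewer than $|X|/3$ vertices, split the components into two groups each of size at least $|X|/3$ with no crossing edge, and contradict Lemma~\ref{l:prop1}. You merely make explicit the greedy component-grouping step that the paper asserts without detail.
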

\begin{proof}
	Let $X$ be a set of vertices with $|X| =x \ge 9\log n/p$. Assume that $X$ does not induce a connected subgraph of order at least $x/3$. Then we can partition $X$ into two parts $Y$ and $X \setminus Y$ with
	$2x/3 \ge |Y|\ge |X-Y| \ge x/3$ and there is no edge between $X \setminus Y$ and $Y$. But this implies, in particular, that we have $A \subset Y$ and $B \subseteq X \setminus Y$ with $|A|=|B|=t=\lceil 3\log n/p \rceil$ and no edge connecting them,
	and we have just proved in Lemma \ref{l:prop1} that w.h.p. this does not occur in $G(n,p)$.
\end{proof}

For a permutation $\sigma$ of vertices of an $n$-vertex graph $G$, we say that a pair of vertices $\{u,v\}$
is {\em $k$-extremal in $\sigma$} if $\max \{\sigma^{-1}(u)\,,\sigma^{-1}(v)\} \le k$ or
$\min \{\sigma^{-1}(u)\,,\sigma^{-1}(v)\} \ge n-k+1$.
Let $\F$ be a perfect separating family of $G$, with multiplicity $\lambda$.
We say that an edge $uv$ of $G$ is {\em $k$-extremal in $\F$} if $\{u,v\}$ is $k$-extremal in at least $\lambda$ elements of $\F$.
\begin{lemma}\label{l:prop3}
Assume that $p = p(n)$ satisfies $n^{-0.4} \le p < 1$. Then w.h.p.
the following hold for every perfect separating family $\F$ of $G \sim G(n,p)$:
Either $|\F| \ge n p/4$ or else there is an edge $uv$ of $G$ such that $uv$ is not $\lfloor \sqrt{n} \rfloor$-extremal in $\F$.
\end{lemma}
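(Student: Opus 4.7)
I plan to prove Lemma \ref{l:prop3} by a short double-counting argument combined with a standard concentration bound on $|E(G)|$.

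First, I pin down $|E(G)|$. Its expectation is $\binom{n}{2}p \ge n^{2}p/4$, and under the hypothesis $p \ge n^{-0.4}$ this expectation is at least $n^{1.6}/4$, so a routine Chernoff bound gives $|E(G)| \ge n^{2}p/4$ with probability $1-o(1)$. Throughout I will assume this high-probability event. I will also use the fact, which also holds w.h.p.\ for $p \ge n^{-0.4}$, that $G$ contains at least one pair of disjoint edges; this guarantees that the multiplicity $\lambda$ of any perfect separating family $\F$ of $G$ is a positive integer, and in particular $\lambda \ge 1$.

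Second, I bound the per-permutation extremal budget. For any $\sigma \in S_n$, a pair $\{u,v\}$ is $\lfloor\sqrt{n}\rfloor$-extremal in $\sigma$ iff both of $u,v$ lie in the first $\lfloor\sqrt{n}\rfloor$ positions of $\sigma$ or both lie in the last $\lfloor\sqrt{n}\rfloor$ positions. Hence the number of such pairs is at most $2\binom{\lfloor\sqrt{n}\rfloor}{2} \le \lfloor\sqrt{n}\rfloor^{2} \le n$.

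Third, I combine these by double counting. Let $\F$ be a perfect separating family of $G$ and suppose that every edge $uv \in E(G)$ is $\lfloor\sqrt{n}\rfloor$-extremal in $\F$, meaning $\{u,v\}$ is $\lfloor\sqrt{n}\rfloor$-extremal in at least $\lambda$ elements of $\F$. Counting pairs $(\sigma, uv)$ with $\sigma \in \F$, $uv \in E(G)$, and $\{u,v\}$ $\lfloor\sqrt{n}\rfloor$-extremal in $\sigma$ in two ways,
\begin{align*}
\lambda\,|E(G)| \;\le\; \sum_{\sigma \in \F}\bigl|\{uv \in E(G):\{u,v\}\text{ is }\lfloor\sqrt{n}\rfloor\text{-extremal in }\sigma\}\bigr| \;\le\; n\,|\F|.
\end{align*}
Using $\lambda \ge 1$ and $|E(G)| \ge n^{2}p/4$ yields $|\F| \ge |E(G)|/n \ge np/4$, which is precisely the contrapositive of the stated disjunction.

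The main (mild) subtlety is recognizing that one must use only the crude bound $\lambda \ge 1$ rather than the equality $\lambda = |\F|/3$ (which holds for perfect separating families of $K_n$ but need not hold for subgraphs of $K_n$, since $\F$ may contain extra permutations that interleave every pair of disjoint edges of $G$). Once this is noticed, the proof reduces to the two ingredients above, both routine.
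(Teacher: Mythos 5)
Your proposal is correct and follows essentially the same argument as the paper: a Chernoff-type bound giving $|E(G)| > n^2p/4$ w.h.p., the observation that each $\sigma$ has fewer than $n$ pairs that are $\lfloor\sqrt{n}\rfloor$-extremal in it, and a double count using only $\lambda \ge 1$ (the paper phrases it as the contrapositive, assuming $|\F| < np/4$ and exhibiting a non-extremal edge). Your remark that one must not rely on $\lambda = |\F|/3$ for subgraphs of $K_n$ is a correct and worthwhile clarification, though the paper's proof already implicitly uses only $\lambda \ge 1$.
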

\begin{proof}
	Assume that $|\F| < n p/4$.
	For each $\sigma \in \F$, there are at most $2\binom{\lfloor \sqrt{n} \rfloor}{2} < n$ pairs
	that are $\lfloor \sqrt{n} \rfloor$-extremal in $\sigma$.
	The overall count of $\lfloor \sqrt{n} \rfloor$-extremal pairs is therefore less than $n|\F| < n^2 p/4$.
	But the expected number of edges in $G$ is $p\binom{n}{2} > p n^2/3$ and distributed binomially ${\mathcal B}(\binom{n}{2},p)$, so w.h.p. $G$ has more than $n^2 p/4$ edges. Hence, w.h.p. there is an edge $uv$
	of $G$ such that $uv$ is not $\lfloor \sqrt{n} \rfloor$-extremal in $\F$.
\end{proof}
Notice that the last lemma holds also for values of $p$ significantly smaller than $n^{-0.4}$, but as later considerations require this assumption, we preferred to state the lemma in this way.

\vspace*{4mm}
\noindent {\em Proof of Theorem \ref{t:rand-lower}.}
According to the theorem's statement, we assume that  $p=p(n)$ satisfies $n^{-0.4} \le p < 1$.
We may also assume that $p \le 1-1/n$ as otherwise, each vertex of $G(n,p)$ has probability
at least $1/e$ of having degree $n-1$, so (e.g. using Chebyshev's inequality) w.h.p. $G(n,p)$
has a clique of order at least $n/4$ and the result follows from Theorem \ref{t:main}.
Hereafter, $n^{-0.4} \le p < 1-1/n$ so $1/n \le q=1-p \le 1-n^{0.4}$.

Let $G \sim G(n,p)$ and let $\F$ be a perfect separating family of $G$ with multiplicity $\lambda$.
By Lemmas \ref{l:prop1}, \ref{l:prop2}, \ref{l:prop3} we may assume that $G$ satisfies the statements of each of these lemmas. Furthermore, $\overline{G}$ (the complement of $G$) is chosen uniformly from $G(n,q)$ so
we may also assume by Lemma \ref{l:minrank} that
${\rm minrank}_{\mathbb R}(\overline{G}) \ge \frac{n \log(1/q)}{80\log n}$.
Finally, if $|\F| \ge n p/4$ then the statement of the theorem holds, so we may assume that $|\F| < n p/4$
and hence by Lemma \ref{l:prop3}, there is an edge $uv$ of $G$ such that $uv$ is not
$\lfloor \sqrt{n} \rfloor$-extremal in $\F$. We may assume by relabeling that $\{u,v\}=\{n-1,n\}$.
We prove that ${\mathrm{PSD}}(G) \ge \frac{np}{200 \log n}$.

As in the proof of Lemma \ref{l:lower}, we consider the matrices $A,B,C=AA^T+BB^T$ defined there.
We will prove that $rank(C) \ge \frac{np}{100\log n}$ and obtain using (\ref{e:triv}) that $|\F| \ge \frac{np}{200\log n}$ as required.
First observe that for each edge $xy$ of $G$ it holds that $C[x,y]=\lambda$. However, unlike the proof of Lemma \ref{l:lower},
we have no information regarding the off-diagonal entries of $C$ that correspond to non-edges of $G$ (equivalently,
edges of ${\overline G}$). We next consider diagonal entries of $C$. We first claim that $C[w,w] \ge \lambda$
for each $w \in [n-2]$. To see this, observe that the expected minimum degree of $G$ is $\Theta(np)$ and hence
w.h.p. the minimum degree of $G$ is at least $3$, so we may assume this is the case. But this implies that $w$ has some neighbor $w' \notin \{u,v\}$. As the edge $w'w$ is separated from $uv$ precisely $\lambda$ times, and as each $\sigma \in \F$ which separates them contributes to $C[w,w]$ (as $w$ must either be before both $u,v$ in $\sigma$ or after both $u,v$ in
$\sigma$), we have that $C[w,w] \ge \lambda$. However, it would also be necessary to prove that many diagonal entries are, in fact, larger than $\lambda$ which is what we show next.

Let $W \subseteq [n-2]$ be the set of vertices with $w \in W$ if $C[w,w]=\lambda$. We show that
$|W| \le 9\log n/p$. Assume otherwise, then by Lemma \ref{l:prop2}, there is a subset $W' \subset W$
with $|W'| = t=\lceil 3\log n/p \rceil$ such that $W'$ induces a connected subgraph in $G$.
Consider some $w \in W'$ and let $\F_w \subseteq \F$ consist of all $\sigma \in \F$ such that
$w$ is either before both $u,v$ in $\sigma$ or after both $u,v$ in $\sigma$. Then, by the definition of
$W$, we have that $|\F_w| = \lambda$. Now, let $w' \in W'$ be any neighbor of $w$.
Then $w'w$ is separated from $uv$ precisely by the elements of $\F_w$, and since $w' \in W$ this implies that
$\F_w=\F_{w'}$. But since $W'$ induces a connected subgraph, we have that $\F_w=F_{w'}$ for any pair of
distinct elements of $W'$ so we can actually denote this common subset by $F_{W'}$ as it is independent of the choice of $w \in W'$.
Now let $W^* \subseteq [n-2]$ be the set of vertices having a neighbor in $W'$
(in particular $W' \subseteq W^*$). It follows that for each $\sigma \in \F_{W'}$, all vertices in $W^*$ are either all
before $u,v$ in $\sigma$ or all after both $u,v$ in $\sigma$. It now follows from Lemma \ref{l:prop1} that
$[n-2] \setminus W^*$ contains less than $t$ elements as otherwise $A=W'$ and $B \subset [n-2] \setminus W^*$
with $|B|=t$ violate the statement of the lemma. But this, in turn, implies that $\{u,v\}$ is $(t+1)$-extremal
in $\sigma$. Indeed, if all elements of $W^*$ are after $u,v$ in $\sigma$ then
we have $\max \{\sigma^{-1}(u)\,,\sigma^{-1}(v)\} \le t+1$ and if all elements of $W^*$ are before $u,v$ in $\sigma$ then we have $\min \{\sigma^{-1}(u)\,,\sigma^{-1}(v)\} \ge n-t$. Now, since
$t+1=\lceil 3\log n/p \rceil+1 \le \lfloor \sqrt{n} \rfloor$ (here we use that $p \ge n^{-0.4}$) we have
that $\{u,v\}$ is $\lfloor \sqrt{n} \rfloor$-extremal in $\sigma$. As this holds for each $\sigma \in \F_{W'}$,
and since $|F_{W'}|=\lambda$, we have that $\{u,v\}$ is $\lfloor \sqrt{n} \rfloor$-extremal in $\F$,
contradicting our choice of $u,v$. We have proved that $|W| \le 9\log n/p$.

Consider now the matrix $D=C-\lambda J$ where $J$ is the all-one matrix.
Then we have that for each edge $xy$ of $G$, equivalently, each non-edge $xy$ of $\overline{G}$, it holds that
$D[x,y]=0$. Furthermore, at most  $|W| \le 9\log n/p$ diagonal entries of $D$ are zero.
Arbitrarily set the zero diagonal entries of $D$ to nonzero values, obtaining a matrix $D^*$.
As $D$ and $D^*$ differ in at most $9\log n/p$ entries, we have that $rank(D) \ge rank(D^*)-9\log n/p$.
Furthermore, by Lemma \ref{l:minrank} applied to $\overline{G}$ we have that $rank(D^*) \ge \frac{n \log(1/q)}{80\log n}$. We therefore have that
\begin{align*}
rank(C) & \ge rank(D)-1\\
& \ge rank(D^*)-\frac{9\log n}{p}-1\\
& \ge \frac{n \log(1/q)}{80\log n} - \frac{9\log n}{p}-1\\
& \ge \frac{n \log(1+p)}{80\log n} - \frac{9\log n}{p}-1\\
& \ge \frac{np}{80\log n} - \frac{9\log n}{p}-1\\
& \ge \frac{np}{100\log n}
\end{align*}
where in the last inequality we have used that $p \ge n^{-0.4}$.
\qed

\section{$\Delta$-balanced separation dimension}

Here we prove Theorem \ref{t:fair-lower}. An important ingredient in our proof is the following result of Alon \cite{alon-2009}.
\begin{lemma}[\cite{alon-2009}]\label{l:perturbed}
	There exists an absolute positive constant $\mu$ so that the following holds. Let $D$ be an
	$n$ by $n$ real matrix with $| D[i,i]| \ge \frac{1}{2}$ for all $i$ and
	$|D[i,j]| \le \epsilon$ for all $i \neq j$, where $\frac{1}{2\sqrt{n}} \le \epsilon < \frac{1}{4}$.
	Then
	$$
	rank(D) \ge \frac{\mu \log n }{\epsilon^2 \log(1/\epsilon)}\;.
	$$
\end{lemma}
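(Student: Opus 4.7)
The plan is to use the entrywise polynomial amplification trick with a Chebyshev polynomial, combined with the classical trace-Frobenius rank inequality. First I would reduce to the case $D[i,i]=1$ by left-multiplying $D$ by the invertible diagonal matrix whose $i$-th entry is $1/D[i,i]$; this preserves the rank and weakens the off-diagonal bound at worst to $|D[i,j]|\le 2\epsilon$. Write $r=\mathrm{rank}(D)$ and fix a factorization $D=UV^T$ with $U,V\in\mathbb{R}^{n\times r}$, so $D[i,j]=\langle u_i,v_j\rangle$. For any polynomial $q$ of degree $m$, the entrywise matrix $q(D)$ satisfies
$$
\mathrm{rank}(q(D))\le\binom{r+m}{m},
$$
since $q(\langle u_i,v_j\rangle)$ lies in the $\binom{r+m}{m}$-dimensional space of polynomial functions of degree $\le m$ in the $r$ coordinates of $v_j$.

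To exploit the gap between $D[i,i]=1$ and $|D[i,j]|\le 2\epsilon$, take $q(x)=T_d(x/(2\epsilon))^2$, the square of the Chebyshev polynomial of the first kind (degree $m=2d$). The bounds $|T_d(y)|\le 1$ for $|y|\le 1$ and $T_d(y)\ge (y+\sqrt{y^2-1})^d/2$ for $y\ge 1$ yield $0\le q(x)\le 1$ on $|x|\le 2\epsilon$, while $q(1)\ge (1/\epsilon)^{2d}/C_0$ for an absolute constant $C_0$ (using $\epsilon<1/4$). Squaring $T_d$ makes all entries of $M=q(D)$ non-negative, eliminating sign cancellations in the trace; in particular $\mathrm{tr}(M)=n\cdot q(1)\ge n(1/\epsilon)^{2d}/C_0$, while $\|M\|_F^2\le n\cdot q(1)^2+n(n-1)$.

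Next I apply the general inequality $(\mathrm{tr}\,M)^2\le \mathrm{rank}(M)\cdot\|M\|_F^2$, valid for any real matrix (via the SVD and Cauchy--Schwarz). Writing $B=q(1)$, this yields $\mathrm{rank}(M)\ge nB^2/(B^2+n)$. Choosing $d=\lceil \log n/(4\log(1/\epsilon))\rceil$ makes $B^2$ of order $n$, so $\mathrm{rank}(M)\gtrsim n$. Combining with the upper bound $\mathrm{rank}(M)\le \binom{r+2d}{2d}\le (e(r+2d)/(2d))^{2d}$ and taking $2d$-th roots,
$$
\frac{e(r+2d)}{2d}\ge n^{1/(2d)}\;\asymp\;\frac{1}{\epsilon^2},
$$
where the last estimate follows from $\log n/(2d)\asymp 2\log(1/\epsilon)$. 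Rearranging gives $r\gtrsim 2d/\epsilon^2\asymp \log n/(\epsilon^2\log(1/\epsilon))$, which is the claim.

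The main obstacle I expect is extracting the sharp $1/\epsilon^2$ rather than the weaker $1/\epsilon$ that the naive Hadamard-power choice $q(x)=x^k$ produces. Chebyshev polynomials are essential precisely because, unlike monomials, they stay uniformly bounded on $[-1,1]$ while growing exponentially outside, producing the crucial amplification factor $(1/\epsilon)^d$. Squaring $T_d$ plays a dual role: it removes all sign issues so that $\mathrm{tr}(M)$ is genuinely $\Omega(nB)$, and it effectively doubles the polynomial degree, which is what combines with the binomial bound $\binom{r+2d}{2d}$ and its exponent $1/(2d)$ to promote the monomial-level $n^{1/(2d)}\sim 1/\epsilon$ to the Chebyshev-level $n^{1/(2d)}\sim 1/\epsilon^2$. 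The hypothesis $\epsilon\ge 1/(2\sqrt n)$ enters by ensuring the optimal $d$ is a positive integer and that the resulting rank lower bound is meaningful (not vacuously exceeding $n$).
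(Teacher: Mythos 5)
First, a point of reference: the paper does not prove this lemma at all --- it is imported verbatim from Alon \cite{alon-2009} --- so there is no in-paper proof to compare against. Your argument is, in essence, Alon's original one: entrywise Chebyshev amplification, the bound $\mathrm{rank}(q(D))\le\binom{r+m}{m}$ for an entrywise polynomial of degree $m$ applied to a rank-$r$ matrix, and the trace--Frobenius inequality $(\mathrm{tr}\,M)^2\le\mathrm{rank}(M)\,\|M\|_F^2$. All three ingredients are correctly stated and correctly deployed, and your explanation of why Chebyshev rather than a monomial is needed for the exponent $2$ in $1/\epsilon^2$ is exactly right.

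There is, however, one quantitative step that is wrong as written and that propagates. You claim $q(1)=T_d(1/(2\epsilon))^2\ge(1/\epsilon)^{2d}/C_0$ for an \emph{absolute} constant $C_0$. What the Chebyshev growth bound actually gives is $T_d(y)\ge\tfrac12\bigl(y+\sqrt{y^2-1}\bigr)^d$ with $y=1/(2\epsilon)$, hence $q(1)\ge\tfrac14(1/(2\epsilon))^{2d}=(1/\epsilon)^{2d}\,4^{-d}/4$; the deficit $4^{-d}$ (or, with the $\sqrt{y^2-1}$ term, roughly $0.87^{d}$) is exponential in $d$, not a constant. Consequently, with your choice $d=\lceil\log n/(4\log(1/\epsilon))\rceil$ it is not true that $B^2=q(1)^2\gtrsim n$; one only gets $B^2\gtrsim n^{1-c/\log(1/\epsilon)}$, and feeding that into $\mathrm{rank}(M)\ge nB^2/(B^2+n)$ and then $n^{1/(2d)}$ loses a factor $\epsilon^{c'}$ in the final bound, which is not an absolute constant over the allowed range $\tfrac{1}{2\sqrt n}\le\epsilon<\tfrac14$. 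The repair is routine but must be done: carry $1/(2\epsilon)$ rather than $1/\epsilon$ as the amplification base throughout (harmless, since $\log(1/(2\epsilon))=\log(1/\epsilon)-1\ge\tfrac12\log(1/\epsilon)$ when $\epsilon\le\tfrac14$), and, rather than insisting that a single $d$ simultaneously forces $B^2\ge n$ and $(n/2)^{1/(2d)}\gtrsim 1/\epsilon^2$ (these two constraints on $d$ pull in opposite directions and near the endpoints of the $\epsilon$-range they conflict at the level you set them), split into the two cases $q(1)^2\ge n$ and $q(1)^2<n$. In the first case $\mathrm{rank}(M)\ge n/2$ and the binomial bound gives $r\gtrsim d\,(n/2)^{1/(2d)}$; in the second case $\mathrm{rank}(M)\ge q(1)^2/2$ and the same binomial bound gives $r\gtrsim d\,(q(1)^2)^{1/(2d)}\gtrsim d/\epsilon^2$ directly, with no appeal to $n^{1/(2d)}$. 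Choosing $d$ of order $\log n/\log(1/\epsilon)$ then yields $r\gtrsim \log n/(\epsilon^2\log(1/\epsilon))$ in both cases, with only absolute-constant losses. With that bookkeeping fixed, your proof is complete and is the standard one.
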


We also need the following lemma, which is the lower bound for separation dimension of $K_n$ proved by
Basavaraju et al. \cite{BCG+-2014}.
\begin{lemma}[\cite{BCG+-2014}]\label{l:pikn}
	$$
	\pi(K_n) \ge \log \lfloor n/2 \rfloor\;.
	$$
\end{lemma}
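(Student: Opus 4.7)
The plan is to prove the lower bound by exhibiting, for any separating family $\F=\{\sigma_1,\ldots,\sigma_d\}$ of $K_n$, an injective binary code on a fixed perfect matching. Let $M=\{e_1,\ldots,e_m\}$ be any perfect matching of $K_n$ with $m=\lfloor n/2\rfloor$; every pair of edges in $M$ is disjoint, so some $\sigma_i\in\F$ separates it. If I can build a map $\Phi\colon M\to\{0,1\}^d$ such that distinct edges of $M$ receive distinct codewords, then $m\le 2^d$ and hence $d\ge\log_2\lfloor n/2\rfloor$.

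The natural attempt is to define, for each $\sigma_i$, a bit $\Phi_i(e)\in\{0,1\}$ capturing some canonical feature of the two-element interval of $e$ in $\sigma_i$, with the property that any two edges separated by $\sigma_i$ receive different $\Phi_i$-bits. A first guess is to set $\Phi_i(e)=0$ if both endpoints of $e$ lie in the first $\lfloor n/2\rfloor$ positions of $\sigma_i$ and $\Phi_i(e)=1$ otherwise. If this implication held, then, combined with the hypothesis that $\F$ separates every disjoint pair, $\Phi$ would be injective on $M$ and the proof would be complete.

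The main obstacle is that the naive half-split fails: two edges of $M$ can both sit inside the first half of $\sigma_i$ yet still be separated by $\sigma_i$ (their intervals are disjoint within that half). To handle this I would pass to a recursive reduction. Assign each vertex $v$ the $d$-bit code $c(v)_i=1$ iff $\sigma_i^{-1}(v)>\lfloor n/2\rfloor$; by pigeonhole some code class $C\subseteq V(K_n)$ has $|C|\ge n/2^d$, and every vertex of $C$ lies in a single half of each $\sigma_i$. Consequently the restrictions $\{\sigma_i|_C\}$ form a separating family of $K_{|C|}$, and induction on $n$ yields $d\ge\log\lfloor |C|/2\rfloor \ge \log\lfloor n/2^{d+1}\rfloor$.

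The delicate final step, which is the crux of the argument in \cite{BCG+-2014}, is to tighten this recursion (which a priori loses a factor of two) to the exact bound $\log\lfloor n/2\rfloor$. This would be done by refining the per-permutation bit so that, on the matching $M$, each $\sigma_i$ contributes exactly one bit of resolution via interval disjointness, and arguing by a direct injection on $M$ rather than via the recursive halving on $V(K_n)$. The hard part is choosing the bit: one must exploit the structure of interval orders on $M$ induced by each $\sigma_i$ to ensure that separated edges always receive opposite bits, which in turn makes the composite code $\Phi$ injective on all of $M$ and yields the stated bound.
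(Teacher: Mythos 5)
There is a genuine gap, and it is fatal to the whole framing rather than just to the ``delicate final step'' you defer. First, note that the paper does not prove this lemma at all --- it is quoted from \cite{BCG+-2014} --- so the only question is whether your argument stands on its own. It does not: no lower bound beyond the trivial one can be extracted from a \emph{fixed} perfect matching $M$. Indeed, the single permutation $\sigma=(a_1,b_1,a_2,b_2,\ldots,a_m,b_m)$, which lists the two endpoints of each matching edge consecutively, separates \emph{every} pair of edges of $M$, since the $m$ intervals $[2j-1,2j]$ are pairwise disjoint. Consequently the bit you are searching for --- a map $\Phi_i\colon M\to\{0,1\}$ such that any two edges separated by $\sigma_i$ receive different bits --- cannot exist: in the example above, three pairwise-separated edges would need three distinct values in $\{0,1\}$. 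This is not a defect of the ``naive half-split'' that a cleverer bit could repair; it shows that the implication ``$\F$ separates all pairs of $M$'' $\Rightarrow$ ``$|M|\le 2^{|\F|}$'' is simply false, so the injective-code strategy on a matching is unsalvageable. Any correct proof must choose the four vertices $a,b,c,d$ adaptively as a function of $\F$ and exploit configurations in which the two edges are nested or interleaved in every permutation (for instance, it suffices to find three vertices $a,b,c$ with $c$ strictly between $a$ and $b$ in every $\sigma_i$, since then $\{a,b\}$ and $\{c,d\}$ are unseparated for any fourth vertex $d$); such pairs never both lie in a prechosen matching.

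Your fallback recursion does not rescue the argument either. The pigeonhole step buys nothing: the restriction of a separating family to \emph{any} vertex subset $C$ is a separating family of $K_{|C|}$, whether or not $C$ is a code class, so the only content is $|C|\ge n/2^d$ --- and the resulting inequality $d\ge\log\lfloor n/2^{d+1}\rfloor$, even granting the full-strength inductive hypothesis for smaller $n$, yields only $d\ge(\log n)/2-O(1)$, which fails to re-establish $d\ge\log\lfloor n/2\rfloor$, so the induction never closes. Worse, the loss compounds: an induction of the shape $d\ge c\log(n/2^d)$ gives only $d\ge\frac{c}{1+c}\log n$, which is strictly weaker than $c\log n$ for every $c>0$, so this recursion cannot prove \emph{any} logarithmic lower bound, let alone the stated one. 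You would need to either consult the actual argument of \cite{BCG+-2014} or build the unseparated pair directly by an iterated pigeonhole/Erd\H{o}s--Szekeres-type selection on the position vectors $(\sigma_1^{-1}(v),\ldots,\sigma_d^{-1}(v))$.
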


Let $\F$ be a separating family of $K_n$. For a pair of distinct vertices $u,v$ and for a vertex $w \notin \{u,v\}$, let $s_{\F}(u,v,w)$ be the number of elements $\sigma \in \F$ such that $w$ is before both $u,v$ in $\sigma$ or after both $u,v$ in $\sigma$. Let $s_{\F}(u,v)=\sum_{w \in [n] \setminus \{u,v\}} s_{\F}(u,v)$.

\begin{lemma}\label{l:avg}
	There exists a pair of distinct vertices  $u,v$ such that $s_{\F}(u,v) \ge \frac{2}{3}|\F|(n-2)$.
\end{lemma}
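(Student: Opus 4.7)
The plan is an averaging argument over the $\binom{n}{2}$ pairs of distinct vertices. I will compute the total $\Sigma := \sum_{\{u,v\}} s_{\F}(u,v)$ exactly by reindexing the underlying sum, and then apply pigeonhole to extract a pair with above-average count.

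First I unfold the definition: $\Sigma$ counts quadruples $(\sigma,u,v,w)$ with $\sigma \in \F$, $\{u,v\} \in \binom{[n]}{2}$, and $w \in [n]\setminus\{u,v\}$ such that $w$ lies on the same side of $\{u,v\}$ in the linear order $\sigma$ (i.e.\ $w$ is not strictly between $u$ and $v$). I will reindex this count by pairs $(\sigma,T)$ where $T$ ranges over unordered triples of distinct vertices, noting that every such quadruple corresponds to a unique $(\sigma,T)$ via $T = \{u,v,w\}$.

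For each fixed $\sigma$ and $T = \{a,b,c\}$, let $m$ be the middle element of $T$ in $\sigma$ and let $x,y$ be the two extremes. Of the three ways to split $T$ into a pair plus an external vertex: the split $(\{x,y\},m)$ contributes $0$ because $m$ lies strictly between $x$ and $y$, while the splits $(\{x,m\},y)$ and $(\{y,m\},x)$ each contribute $1$, because $y$ lies after both $x$ and $m$, and $x$ lies before both $y$ and $m$. Hence each $(\sigma,T)$ contributes exactly $2$ to $\Sigma$, so $\Sigma = 2|\F|\binom{n}{3}$. Averaging over the $\binom{n}{2}$ pairs yields some $\{u,v\}$ with $s_{\F}(u,v) \ge 2|\F|\binom{n}{3}/\binom{n}{2} = \tfrac{2}{3}|\F|(n-2)$, as required.

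The argument is essentially an exercise in double counting, and no real obstacle arises. The only step requiring a moment's care is the per-triple accounting above, which simply reflects the trivial fact that in any linear order of three elements exactly one lies strictly between the other two; this gives the constant $2$ (and hence the constant $\tfrac{2}{3}$ in the lemma) automatically.
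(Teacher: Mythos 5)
Your proof is correct and is essentially the paper's own argument: both compute the total of $s_{\F}(u,v)$ over all pairs by double counting (each $\sigma$ contributes $2\binom{n}{3}=\frac{1}{3}n(n-1)(n-2)$, since exactly two of the three splits of each triple count) and then average over the $\binom{n}{2}$ pairs. Your version merely spells out the per-triple accounting that the paper leaves implicit.
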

\begin{proof}
	Let $S$ be the sum of $s_{\F}(u,v)$ taken over all pairs of distinct vertices $u,v$.
	Each $\sigma \in \F$ contributes precisely $\frac{1}{3}n(n-1)(n-2)$ to $S$  implying that
	$S= \frac{1}{3}|\F|n(n-1)(n-2)$. By averaging, there is pair $u,v$ with
	$s_{\F}(u,v) \ge |S|/\binom{n}{2}=\frac{2}{3}|\F|(n-2)$.
\end{proof}

\vspace*{4mm}
\noindent {\em Proof of Theorem \ref{t:fair-lower}.}
Let $\F$ be a separating family of $K_n$ which is $\Delta$-balanced for $\Delta \le c \log n$ where $c$ is an absolute constant to be chosen later. Observe first that for any pair of disjoint edges $e,f$,
it holds that
\begin{equation}\label{e:fair}
\left| c_{\F}(\{e,f\}) -\frac{|\F|}{3} \right| \le c \log n\;.
\end{equation}
Indeed, suppose $e=xy$ and $f=zw$.
Then (\ref{e:fair}) holds since each element of $\F$ separates precisely one of the three pairs of edges $\{xy,zw\}$, $\{xz,yw\}$, $\{xw,yz\}$ and since $\F$ is $\Delta$-balanced.

Let $\frac{1}{48} > c > 0$ be a real number such that for all $\alpha \ge \frac{1}{2}$ it holds that
\begin{equation}\label{e:c}
\frac{\mu \alpha}{216 c^2 \log(\frac{\alpha}{6c})} < 1
\end{equation}
where $\mu$ is the constant from Lemma \ref{l:perturbed}.

We will prove that $|\F| \ge n/K$ where $K=3/(2\mu)$. Assume hereafter, in contradiction, that $|\F| \le n/K$.

Let $u,v$ be two vertices with $s_{\F}(u,v) \ge \frac{2}{3}|\F|(n-2)$ which exist by Lemma \ref{l:avg}.
By relabeling, we may assume that $\{u,v\}=\{n-1,n\}$. As in the proof of Lemma \ref{l:lower}, we consider the matrices $A,B,C=AA^T+BB^T$ defined there. Each diagonal entry $C[w,w]$ counts the number of $\sigma \in \F$ such that
$w$ is either before both $u,v$ or after both $u,v$. Hence, $tr(C)=s_{\F}(u,v)\ge \frac{2}{3}|\F|(n-2)$.
So, on average, a diagonal element has value $\frac{2}{3}|\F|$ while trivially, each diagonal entry does not exceed $|\F|$. Let $W \subseteq [n-2]$ be the set of vertices with $C[w,w] \ge |\F|/2$.
Then we have that $|W| \ge (n-2)/3$ as otherwise $tr(C) < \frac{2}{3}|\F|(n-2)$.
Let $C_W$ be the submatrix of $C$ obtained by restricting the rows and columns to $W$.
Then the dimension of $C_W$ is at least $(n-2)/3$ and clearly $rank(C_W) \le rank(C)$.
We will lower-bound $rank(C_W)$ and in fact show that $rank(C_W) > 2|\F|$ which contradicts the fact that
$rank(C) \le 2|\F|$ by (\ref{e:triv}).

Let $\alpha$ be such that $|\F|=\alpha \log n$.
Observe that by Lemma \ref{l:pikn} we have that $\alpha \ge \log \lfloor n/2 \rfloor/\log n$ (so $\alpha > \frac{1}{2}$)
and by our assumption, $\alpha \le n/(K \log n)$. We consider two cases, according to $\alpha$.
The first case is $6c\sqrt{n} \ge \alpha \ge \log \lfloor n/2 \rfloor/\log n$ and the second case is
$n/(K \log n) \ge \alpha > 6c\sqrt{n}$.

\vspace*{4mm}
\noindent {\em Case 1.} Here $6c\sqrt{n} \ge \alpha \ge \log \lfloor n/2 \rfloor/\log n$.
So, $C_W$ is a matrix of dimension $|W|$ whose diagonal entries are all at least $|\F|/2$ and by (\ref{e:fair}), the off-diagonal entries differ by at most $c\log n$ from $|\F|/3$. Let
$D = \frac{6}{|\F|}(C_W-\frac{|\F|}{3}J)$ where $J$ is the all-one matrix.
So the diagonal entries of $D$ are all at least $1$ and the off-diagonal entries are all, in absolute value, at most
$\frac{6}{|\F|}{c \log n} = 6c/\alpha$. Furthermore, we have
$$
\frac{1}{2\sqrt{|W|}} \le \frac{1}{2\sqrt{(n-2)/3}} \le \frac{1}{\sqrt{n}} \le \frac{6c}{\alpha} < \frac{1}{4}\;.
$$
By Lemma \ref{l:perturbed} and by the choice of $c$ satisfying (\ref{e:c}) we obtain that
\begin{align*}
rank(D) & \ge \frac{\mu \alpha^2 \log |W| }{36 c^2 \log(\alpha/(6c))}\\
&  \ge \frac{\mu \alpha^2 \log n }{72 c^2 \log(\alpha/(6c))}\\
& \ge  3\alpha \log n\\
& = 3|\F|\;.
\end{align*}
But since $rank(C) \ge rank(C_W) \ge rank(D)-1 \ge 3|\F|-1$ we have that $rank(C) > 2|\F|$, contradicting the fact
that $rank(C) \le 2|\F|$ by (\ref{e:triv}).

\vspace*{4mm}
\noindent {\em Case 2.} Here $n/(K \log n) \ge \alpha > 6c\sqrt{n}$.
This time we define $D = \frac{1}{\sqrt{n}c \log n}(C_W-\frac{|\F|}{3}J)$.
So the diagonal entries of $D$ are all at least $1$ and the off-diagonal entries are all, in absolute value, at most
$\frac{1}{\sqrt{n}} \ge \frac{1}{2\sqrt{|W|}}$. So, we can apply Lemma \ref{l:perturbed} with $\epsilon=1/\sqrt{n}$
and obtain that
$$
rank(D) \ge \frac{\mu n \log n }{\log (\sqrt{n})} = 2\mu n = 3n/K \ge 3|\F|\;.
$$
Concluding as in the previous case, we arrive at the exact same contradiction.
\qed

\section*{Acknowledgment}

I thank Abhiruk Lahiri for interesting discussions on separation dimension and the referees for helpful comments.

\bibliographystyle{plain}

\bibliography{references}

\end{document}